\newcommand{\Z}{\mathbf Z}
\newcommand{\Zz}{\mathbf {Z}}
\theoremstyle{plain}
\newtheorem{theorem}{Theorem}[section]
\newtheorem{corollary}[theorem]{Corollary}
\newtheorem{lemma}[theorem]{Lemma}
\theoremstyle{definition}
\newtheorem{definition}[theorem]{Definition}
\numberwithin{equation}{section}
\title[Extensions of Positive type Transformations ]{On $v$-positive type Transformations in Infinite Measure}
\author[P\u{a}durariu]{Tudor P\u{a}durariu}
\address[Tudor P\u{a}durariu]{University of California, Los Angeles, CA 90095-1555, USA }
\email {tudor\_pad@yahoo.com}
\author[Silva]{Cesar E. Silva}
\address[Cesar Silva]{Department of Mathematics\\
     Williams College \\ Williamstown, MA 01267, USA}
\email{csilva@williams.edu}
\author[Zachos]{Evangelie Zachos}
\address[Evangelie Zachos]{Princeton University, Princeton, NJ 08544, USA}
\email{evangelie.zachos@gmail.com}
\subjclass[2000]{Primary 37A40}
\keywords{Infinite measure-preserving, ergodic, positive type, rank-one}
\begin{document}
\bibliographystyle{alpha} 

\maketitle

 \begin{abstract}
For each vector $v$ we define the notion of a $v$-positive type for infinite measure-preserving transformations, a refinement of positive type as introduced by Hajian and Kakutani. We prove that a positive type transformation need not be  $(1,2)$-positive type.
We study this notion in the context of Markov shifts and multiple recurrence and give several examples.
\end{abstract}

\section{Introduction}

Let $(X, \mathcal B, \mu)$ be a standard Borel measure space with a sigma-finite nonatomic measure. Let  $T:X\to X$ be a measure-preserving transformation (i.e., $T^{-1}(A)$ is measurable and  $\mu(T^{-1}A)=\mu(A)$ for all $A\in\mathcal B$). We will assume that $T$ is invertible and $T^{-1}$ is measure preserving, although the definitions given can be easily generalized to the noninvertible case. We concentrate on measure spaces with infinite measure. A transformation is ergodic if every invariant set is null or conull; it is conservative it if admits no wandering sets of positive measure. It follows that a transformation $T$ is conservative ergodic if for every pair of measurable sets $A$ and $B$ there exists a positive integer $n$ such that $\mu(A\cap T^{-n}B)>0$. If an invertible measure-preserving transformation is ergodic on  a nonatomic space, then it is conservative. All of our transformations will be assumed to be conservative ergodic. 

To classify  infinite measure-preserving transformations, Hajian and Kakutani introduced the concepts of zero and positive type in \cite{HK64}.
A transformation $T$ is of  \emph{zero type} if for all sets $A$ of finite measure 
\[\lim_{n\to \infty} \mu(T^{-n}A\cap A) = 0.\]
A transformation  $T$ is of  \emph{positive type} if for all sets $A$ of positive, finite measure 
\[\limsup_{n\to \infty} \mu(A\cap T^{-n}A) > 0.\]

 If $T$ is infinite measure-measure preserving and ergodic, the Birkhoff ergodic theorem immediately implies that
  \[\liminf_n \mu(T^{-n}A\cap A)=0\,\,\text {for all finite measure } A.\] 
The $\limsup$ definition for positive type thus captures the greatest possible variation in the limit behavior of these intersections.
In \cite{HK64}, Hajian and Kakutani note that conservative ergodic infinite measure-preserving transformations are either zero type or positive type. Additionally, it is known that such a transformation $T$ is zero type if and only if all powers $T^k$ are zero type (where the proof does {\emph not} depend on ergodicity).

 In this paper, we introduce a new concept inspired by the definition of positive type, called $v$-positive type, for each vector $v$ in $\Z^d$. 
This notion provides a further classification of positive type transformations. We also define a concept related to  $v$-positive type that we call  $v$-positive multiplicative type.
In Section~\ref{S:vpositive} we prove that $v$-positive multiplicative type is  equivalent to   the corresponding Cartesian product transformation being positive type. It is a direct consequence of the definitions that $v$-positive type implies $v$-positive multiplicative type, but we prove that 
the converse does not hold. In fact, in Section~\ref{S:PTexamples} we prove that for every $v$ such that $v_d/v_1\geq d$, there is a transformation that is $v$-multiplicative positive type 
but not $v$-positive type. We also construct $v$-positive type transformations for each $v$. These constructions are rank-one examples and use approximation results from Section~\ref{S:approx}. 
In Section~\ref{S:Markov}, we use results involving Markov shifts to discuss 
connections with multiple recurrence. Section 7 discusses varying the vector $v$ to compare $v$-positive type and $w$-positive type.

\subsection{Acknowledgements}
This paper arose out of research from the ergodic theory group of the 2012 SMALL Undergraduate Research Project at Williams College.
Support for this project was provided by the National Science Foundation REU Grant DMS - 0353634 and the Bronfman Science Center of Williams College. We reserve special thanks to the other participants in the 2012 ergodic theory group: Irving Dai, Xavier Garcia, Shelby Heinecke, and Emily Wickstrom.  We would also like to thank Stanley Eigen and Arshag Haijian for some discussions during the 2012 Williams Ergodic Theory Conference. Finally, we would like to thank Emmanuel Roy for comments on Section 5.

\section{$v$-positive type and $v$-multiplicative-positive type}\label{S:vpositive}

We begin with a vector, $v=(v_1,v_2,\ldots, v_d)\in (\mathbb{Z}^+)^d$. We study each vector essentially independently of all other vectors until section 7. Section 6 provides some motivation for the following definitions. 
\definition{A transformation $T$ is $v$-positive type for a set $A$ if $$\limsup_{n\to\infty}\mu(A\cap T^{v_1 n}A \cap\ldots\cap T^{v_d n}A)>0.$$ We define $T$ to be $v$-positive type if $T$ is $v$-positive for all sets of positive finite measure.}

\definition{A transformation $T$ is $v$-multiplicative-positive type for a set $A$ if $$\limsup_{n\to\infty}\mu(A\cap T^{v_1 n}A)\ldots\mu(A\cap T^{v_d n}A)>0$$ We similarly define $T$ to be $v$-multiplicative-positive type if $T$ is $v$-multiplicatively-positive type for all sets of positive finite measure. }\\

Of course, these definitions can only be satisfied if $T$ is positive type. They offer, however,  a way  for further classifying into subcategories different positive type transformations. 
Note that we can easily confine ourselves to vectors in $(\mathbf{Z}^+)^d$, for given $(v_1, \ldots, v_d)\in \mathbf{Z}^d$ where $v_1$ is the negative number of largest absolute value, then $0<\mu(A\cap T^{v_1n}A\cap \ldots \cap T^{v_dn} A)=\mu(T^{-v_1n}A\cap A \cap T^{(-v_1+v_2)n} A\cap \ldots \cap T^{(-v_1+v_d)n}A)$, so that we can investigate the $v$-positive type of $T$ by evaluating the $w$-positive type of $T$ where $w=(v_2-v_1, v_3-v_1, \ldots, v_d-v_1, -v_1)\in (\mathbf{Z}^+)^d$. It is typical to list these vectors with $v_i$ distinct and $v_{i+1}> v_i$.

In general, of the two definitions, $T$ $v$-multiplicative-positive type is easier to work with. The following theorem explains some of this ease, where the proof technique is adapted from Aaronson and Nakada \cite{AN00}. We start with two lemmas before presenting the theorem.

\begin{lemma}  \label{savior}
Suppose that $T$ is conservative ergodic and $v$-multiplicative-positive type. Then, for every collection $A_1, \ldots, A_d$ of sets of positive, finite measure, $$\limsup\mu(A_1\cap T^{v_1n} A_1) \ldots\mu (A_d \cap T^{v_d n}A_d)>0$$
\end{lemma}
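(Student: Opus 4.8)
\emph{Plan.} The idea is to deduce the statement for a collection $A_1,\dots,A_d$ of (possibly distinct) sets from the hypothesis applied to a \emph{single} well-chosen set. Conservative ergodicity is the link: it lets us produce one set $C$ of positive finite measure whose forward translates can be slid inside all of the $A_i$ simultaneously.

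\emph{Step 1: a common set.} Put $t_1=0$ and $C_1=A_1$. Proceed inductively: given $C_{i-1}\subseteq A_1$ of positive finite measure with $T^{t_j}C_{i-1}\subseteq A_j$ for $j<i$, apply the conservative ergodicity criterion recalled in the introduction to find an integer $t_i$ with $\mu\big(C_{i-1}\cap T^{-t_i}A_i\big)>0$, and set $C_i=C_{i-1}\cap T^{-t_i}A_i$. Then $C_i\subseteq A_1$ still has positive finite measure and $T^{t_j}C_i\subseteq A_j$ for all $j\le i$. Taking $C:=C_d$ we obtain a set of positive finite measure with $T^{t_i}C\subseteq A_i$ for every $i=1,\dots,d$ (recall $t_1=0$, so the inclusion also holds for $i=1$).

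\emph{Step 2: apply the hypothesis and transport the estimate.} Since $T$ is $v$-multiplicative-positive type and $\mu(C)\in(0,\infty)$, we have $\limsup_n\prod_{i=1}^d\mu(C\cap T^{v_in}C)>0$. For each fixed $i$ and $n$, the measure-preservation of $T$ (and of $T^{-1}$) together with $T^{t_i}C\subseteq A_i$ gives
\[
\mu\big(C\cap T^{v_in}C\big)=\mu\big(T^{t_i}C\cap T^{v_in}(T^{t_i}C)\big)\le \mu\big(A_i\cap T^{v_in}A_i\big).
\]
Multiplying over $i$ yields $\prod_{i=1}^d\mu(A_i\cap T^{v_in}A_i)\ge\prod_{i=1}^d\mu(C\cap T^{v_in}C)$ for every $n$, so the left-hand side has positive $\limsup$ by Step 2, which is exactly the assertion.

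\emph{Main difficulty.} There is essentially no serious obstacle here; the only point requiring care is the inductive construction in Step 1 — verifying that each successive intersection is non-null, which is precisely where conservative ergodicity is invoked, and that $C$ remains inside $A_1$ so that it has finite measure. The rest is bookkeeping with the invariance of $\mu$ under $T$ and the monotonicity of $\limsup$.
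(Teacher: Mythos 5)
Your proof is correct and follows essentially the same route as the paper: both build a single positive-measure set $C$ (a suitable intersection of translates of the $A_i$, obtained via conservative ergodicity), apply the $v$-multiplicative-positive type hypothesis to $C$, and transport the estimate to each $A_i$ via $T^{t_i}C\subseteq A_i$ and measure preservation. Your Step 1 merely makes explicit the inductive choice of the exponents that the paper states in one line.
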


\begin{proof}
As T is ergodic, we can find $c_2, \ldots, c_d$ such that $$C=A_1\cap T^{c_2} A_2 \cap \ldots \cap T^{c_d}A_d$$ has $\mu(C)>0$. Then, $\limsup \mu(C\cap T^{v_1n} C) \ldots  \mu(C\cap T^{v_dn}C)>0$ but we note that $\mu(A_j \cap T^{v_jn} A_j) = \mu(T^{c_j} A_j\cap T^{v_jn+c_j} A_j) \ge \mu(C\cap T^{nv_j} C)$ so that we can deduce the desired result. 
\end{proof}

As a corollary which is not used in the main proof of the following theorem, we have: 
\begin{corollary}Suppose that $T$ is conservative ergodic and is positive type. Then, for every finite collection of sets $A_1,..., A_m$ of positive, finite measure, we have 
\[\limsup \mu(A_1\cap T^{n}A_1)\ldots \mu(A_m\cap T^{n}A_m) >0. \]
\end{corollary}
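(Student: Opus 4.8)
The plan is to read this corollary as the specialization of Lemma~\ref{savior} to the constant vector $v=(1,1,\ldots,1)\in(\mathbf{Z}^+)^m$, so that almost all the work has already been done. Concretely, I would apply Lemma~\ref{savior} with $d=m$ and this choice of $v$: its conclusion is exactly $\limsup\mu(A_1\cap T^{n}A_1)\ldots\mu(A_m\cap T^{n}A_m)>0$ for every collection $A_1,\ldots,A_m$ of sets of positive finite measure. The only thing that needs checking is that the hypothesis of the lemma holds, i.e.\ that a conservative ergodic positive type $T$ is $(1,\ldots,1)$-multiplicative-positive type.

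To verify that, fix a set $A$ of positive finite measure. The $(1,\ldots,1)$-multiplicative-positive type condition for $A$ reads $\limsup_n \mu(A\cap T^{n}A)^m>0$. Since $t\mapsto t^m$ is continuous and nondecreasing on $[0,\infty)$, we have $\limsup_n \mu(A\cap T^{n}A)^m=\bigl(\limsup_n \mu(A\cap T^{n}A)\bigr)^m$; and since $T$ is invertible and measure-preserving, $\mu(A\cap T^{n}A)=\mu(T^{-n}A\cap A)$, so this equals $\bigl(\limsup_n \mu(A\cap T^{-n}A)\bigr)^m$, which is positive precisely because $T$ is positive type. Thus $T$ is $(1,\ldots,1)$-multiplicative-positive type, and Lemma~\ref{savior} applies to give the stated inequality.

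I do not expect any genuine obstacle here: the statement is labelled a corollary exactly because Lemma~\ref{savior} already contains the substance. The only points requiring (entirely routine) care are the interchange of $\limsup$ with the $m$-th power, which is legitimate because all $m$ factors are the same quantity, and the harmless replacement of $T^{-n}$ by $T^{n}$ coming from invertibility and measure-preservation. One could alternatively give a self-contained argument mirroring the proof of Lemma~\ref{savior} — pick $c_2,\ldots,c_m$ with $\mu(A_1\cap T^{c_2}A_2\cap\cdots\cap T^{c_m}A_m)>0$ by ergodicity, apply positive type to this set, and use $\mu(A_j\cap T^{n}A_j)\ge\mu(C\cap T^{n}C)$ — but deducing it from the lemma is cleaner and shorter.
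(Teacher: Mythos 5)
Your proposal is correct and takes essentially the same route as the paper: the corollary is stated there without proof precisely because it is the specialization of Lemma~\ref{savior} to $v=(1,\ldots,1)$, with positive type coinciding with $(1,\ldots,1)$-multiplicative-positive type via $\limsup_n \mu(A\cap T^nA)^m=\bigl(\limsup_n\mu(A\cap T^nA)\bigr)^m$, exactly as you argue. Your two routine verifications (the interchange of $\limsup$ with the $m$-th power, and $\mu(A\cap T^nA)=\mu(T^{-n}A\cap A)$) are both valid and are the only points the paper leaves implicit.
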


The first part of the following lemma is standard.

\begin{lemma} Let $T$ be conservative ergodic and measure preserving transformation and let $k\in\mathbb{Z^+}$. Then there exists a measurable set $B$ and an integer $1\le \ell \le k$ such that $B, TB, \ldots, T^{\ell-1}B$ is a partition of $X$ and every positive measure invariant subset of $T^k$ contains ($\mod\mu$) at least one of $B$, $TB$,\,\ldots, $T^{\ell-1}B$.\label{pain}\end{lemma}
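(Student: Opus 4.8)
The plan is to study the $\sigma$-algebra $\mathcal I$ of $T^k$-invariant sets (mod $\mu$) and to show that it is purely atomic with at most $k$ atoms, which $T$ permutes in a single cycle; the set $B$ will be any one of those atoms and $\ell$ the number of them. First I would record two routine facts. (i) $\mathcal I$ is carried onto itself by $T$: if $T^kA=A$ then $T^k(TA)=T(T^kA)=TA$ and likewise for $T^{-1}$, so the automorphism of the measure algebra induced by $T$ maps $\mathcal I$ onto itself, hence carries atoms of $\mathcal I$ to atoms of $\mathcal I$. (ii) If $A\in\mathcal I$ has $\mu(A)>0$, then $\bigcup_{i=0}^{k-1}T^iA$ is $T$-invariant (since $T^kA=A$), hence conull by ergodicity, so $X=\bigcup_{i=0}^{k-1}T^iA$ mod $\mu$.

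The crux is to bound the number of pairwise disjoint positive-measure sets in $\mathcal I$ by $k$. Suppose $C_1,\dots,C_{k+1}\in\mathcal I$ were pairwise disjoint with positive measure. By (ii), for each $j$ the set $\bigcup_{i=0}^{k-1}T^iC_j$ is conull, so for $\mu$-a.e.\ $x$ one can choose, for every $j$, an index $i_j\in\{0,\dots,k-1\}$ with $T^{-i_j}x\in C_j$. Then $T^{-i_1}x,\dots,T^{-i_{k+1}}x$ lie in the pairwise disjoint sets $C_1,\dots,C_{k+1}$, so they are $k+1$ distinct points; but they all belong to $\{x,T^{-1}x,\dots,T^{-(k-1)}x\}$, a set of at most $k$ points, a contradiction. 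Consequently there is a largest integer $m$ with $1\le m\le k$ for which there exist pairwise disjoint positive-measure $P_1,\dots,P_m\in\mathcal I$; by maximality each $P_i$ is an atom of $\mathcal I$ (splitting one would produce $m+1$ disjoint positive-measure sets) and $X\setminus\bigcup_iP_i$ is null (otherwise it would be an $(m+1)$st such set). Hence $\{P_1,\dots,P_m\}$ partitions $X$ mod $\mu$ and is exactly the set of atoms of $\mathcal I$.

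To finish, note by (i) that $T$ permutes $\{P_1,\dots,P_m\}$; the union of any orbit of this permutation is $T$-invariant mod $\mu$, hence null or conull, and null is impossible, so the permutation is a single cycle. Relabel so that $P_i=T^{i-1}P_1$ for $i=1,\dots,m$ with $T^mP_1=P_1$ mod $\mu$, and set $B=P_1$ and $\ell=m$. Since $B\in\mathcal I$ we have $T^kB=B$, so $k$ lies in the subgroup $\{n\in\mathbf Z:T^nB=B \text{ mod }\mu\}=\ell\mathbf Z$, giving $\ell\mid k$ and in particular $1\le\ell\le k$; and $B,TB,\dots,T^{\ell-1}B$ is a partition of $X$. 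For the last assertion, if $A\in\mathcal I$ has $\mu(A)>0$ then $A=\bigsqcup_{i=0}^{\ell-1}(A\cap T^iB)$, where each $A\cap T^iB\in\mathcal I$ is contained in the atom $T^iB$, hence null or equal to $T^iB$ mod $\mu$; since $\mu(A)>0$, some $A\cap T^iB$ equals $T^iB$, i.e.\ $T^iB\subseteq A$ mod $\mu$.

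The only genuinely delicate point is the pigeonhole bound in the second paragraph; everything else is bookkeeping in the measure algebra, and it is precisely the pure atomicity of $\mathcal I$ obtained there that makes the clause ``every positive-measure invariant set contains one of $B,TB,\dots,T^{\ell-1}B$'' go through. I would also remark that $\ell$ is in fact the number of ergodic components of $T^k$, so this lemma is just the standard cyclic decomposition of a power of an ergodic transformation, packaged in the form needed later.
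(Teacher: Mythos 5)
Your proof is correct, but it takes a genuinely different route from the paper's. The paper starts from an arbitrary $T^k$-invariant set $A$ of positive measure, manufactures a candidate base $B=A\cap T^{i_1}A\cap\cdots\cap T^{i_r}A$ by first maximizing the number $r$ of terms and then minimizing the measure, checks that $B,TB,\ldots,T^{\ell-1}B$ tiles $X$, and finally runs a minimization over the resulting collection of ``partition bases'' (chains are finite because $\ell_B\le k$) to land on a base whose translates sit inside every invariant set. You instead prove directly that the $\sigma$-algebra $\mathcal{I}$ of $T^k$-invariant sets is purely atomic with at most $k$ atoms --- via the pigeonhole on the orbit $\{x,T^{-1}x,\ldots,T^{-(k-1)}x\}$ --- and that $T$ permutes the atoms in a single cycle; $B$ is then any atom. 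This is the standard cyclic decomposition of a power of an ergodic transformation, and it is arguably cleaner: it sidesteps the paper's somewhat delicate chain/minimality step, and it yields the sharper facts that $\ell\mid k$ (which the paper only mentions parenthetically) and that the $T^iB$ are exactly the ergodic components of $T^k$, from which the final containment statement is immediate by atomicity. Two small points you gloss over, neither fatal: in the pigeonhole step the $C_j$ are disjoint only mod $\mu$, so you should pass to literally disjoint representatives (or make the choice of indices $i_j$ measurable and extract a positive-measure fiber) before drawing a pointwise contradiction; and ``splitting an atom'' should be phrased as finding $Q\in\mathcal{I}$ with $Q\subseteq P_i$ and both $Q$ and $P_i\setminus Q$ of positive measure, since $\mu(P_i)$ may be infinite.
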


\begin{proof}

First, consider all $T^k$-invariant sets of positive measure. Given  such a set $A$, we can create a partition $\mathcal{B}$ with base set $B$ from $A$ in the following canonical method: we consider intersections of the form $A\cap T^{i_1} A\cap \ldots \cap T^{i_{r}}A$ where $i_j < k$ and the $i_j$ are distinct. There are a finite number of such sets, and we pick $B=A\cap T^{i_1} A\cap \ldots \cap T^{i_r}A$ so that $B$ is a set with positive measure with highest possible value of $r$ and then, to break any ties, minimal measure. (When all powers of $T$ are conservative ergodic, for example, both $A$ and $B$ will be forced to be $X$, the entire space). Now, as $A$ is $T^k$-invariant, by definition $B$ is as well, and so $B\cup \ldots \cup T^{k-1}B=X$ by the ergodicity of $T$. We then define $\ell=\ell_B$ to be the smallest positive number such that $B\cap T^\ell B =B$. Such an $\ell\le k$ must exist by the $T^k$-invariance of $B$. (For example, note that if $T^k$ is ergodic, then $\ell=1$. In fact, one can construct examples $B,T$ for any $\ell|k$.) Then by the maximality of $r$,  if $1\le j<\ell$, then $\mu(B\cap T^j B)=0$. In fact, if $1\le j< i< \ell$, then $\mu(T^i B\cap T^j B)=\mu(T^{i-j}B \cap B)=0$ by the same argument.  By our choice of $\ell$, $B\cup \ldots \cup T^{\ell-1}B=X$. Thus, $\mathcal{B}=\{ B, TB, \ldots, T^{\ell-1}B\}$ is  a partition of $X$. 

Now, we use a minimizing argument. Given any two partition bases, $B$ and $C$, there is some choice of $i$ such that $\mu(B\cap T^i C)>0$. Let $\mathcal{S}$ be the set of 
partition bases. Note of course that every set in $\mathcal{S}$ has positive measure and $\mathcal{S}$ is nonempty. Decreasing chains in $\mathcal{S}$ are of the form $\ldots \subset B\subset C \subset D$ with positive measure inclusion. Next, note that for each $B\in \mathcal{S}$, $B\cup T^1 B\cup \ldots \cup T^{\ell_B} B=X$. Now, if $B\subset C$, note that $\mu(X\backslash(B\cup T^1 B\cup \ldots \cup T^{\ell_C} B))>0$ as $\mu(C\backslash B)>0$ and $C$ is a partition base, so we can conclude that $\ell_B>\ell_C$. As $\ell_B\le k$ for all $B\in \mathcal{S}$, we see that chains in $\mathcal{S}$ must be finite. In fact, as we have a bound $k$, there must be a maximum $\ell_B$ for $B\in \mathcal{S}$ and we pick a $B$ with corresponding maximum $\ell_B$, noting that it necessarily must be the base set of a minimal partition, i.e. a base not positive-measure containing the base set to any other partition.  

Given a $T^k$-invariant set $A$ of positive measure, chosen without loss of generality such that $\mu(B\cap A)>0$, we wish to show that $B\subset A \mod \mu$. If, not, i.e. if $\mu(B\backslash A)>0$, then 
$C=A\cap B$ is $T^k$ invariant as well and $C\subset B$ with positive measure inclusion and so we can find the corresponding partition base set $C'\subset C$ which has the property that $C'\subset B$ with strict $\mu$-inclusion. This violates the minimality property of $B$, and so we can conclude that $B\subset A \mod\mu$. \end{proof}

\begin{theorem}
Let $T$ be a conservative ergodic  transformation on the space $(X, \mathcal{B}, \mu)$ and let $v=(v_1,...,v_d)$ where each $v_i\in \mathbb{Z}^+$.  Then $T^{v_1}\times\ldots\times T^{v_n}$ is positive type if and only if $T$ is $v$-multiplicative-positive type. 
\end{theorem}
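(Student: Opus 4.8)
The plan is to prove the two implications separately; the forward one is a one‑line verification, and the reverse one is where the work is.

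\emph{Forward direction.} Suppose $S:=T^{v_1}\times\cdots\times T^{v_d}$ is positive type, and let $A$ be an arbitrary set with $0<\mu(A)<\infty$. The product set $A\times\cdots\times A$ has positive finite $\mu^{\otimes d}$‑measure, so $\limsup_n\mu^{\otimes d}\big((A\times\cdots\times A)\cap S^n(A\times\cdots\times A)\big)>0$; since $S^n(A\times\cdots\times A)=T^{v_1n}A\times\cdots\times T^{v_dn}A$, the intersection equals $(A\cap T^{v_1n}A)\times\cdots\times(A\cap T^{v_dn}A)$, whose measure is $\prod_{i}\mu(A\cap T^{v_in}A)$. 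Taking $\limsup_n$ is exactly the statement that $T$ is $v$-multiplicative-positive type for $A$, and $A$ was arbitrary.

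\emph{Reverse direction: reduction to rectangles.} Assume $T$ is $v$-multiplicative-positive type; we must show $\limsup_n\mu^{\otimes d}(E\cap S^nE)>0$ for every $E\subseteq X^d$ of positive finite measure. Since $\mu^{\otimes d}(E\cap S^nE)$ is monotone increasing in $E$, the family of sets satisfying the conclusion is closed under passing to finite‑measure supersets, so it suffices to exhibit \emph{inside} an arbitrary $E$ a subset for which the conclusion holds. By Lemma~\ref{savior}, every rectangle $A_1\times\cdots\times A_d$ with each $A_i$ of positive finite measure works, since its intersection with its $S^n$-image has measure $\prod_i\mu(A_i\cap T^{v_in}A_i)$. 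The obstruction is that a general positive‑measure $E$ need contain no positive‑measure rectangle (e.g.\ $\{(x,y):x-y\in C\}$ for $C$ a positive‑measure Cantor set in $\mathbf{R}$), so Lemma~\ref{savior} cannot be invoked directly.

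\emph{Reverse direction: the slicing induction.} To get past this I would induct on the number $k$ of ``unstructured'' coordinates, proving for $k=0,1,\dots,d$ the statement $P_k$: for every rectangle $A_1\times\cdots\times A_{d-k}$ with positive‑finite‑measure sides and every positive‑finite‑measure $G\subseteq X^k$,
\[
\limsup_{n\to\infty}\Big(\prod_{i=1}^{d-k}\mu(A_i\cap T^{v_in}A_i)\Big)\,\mu^{\otimes k}\big(G\cap(T^{v_{d-k+1}}\times\cdots\times T^{v_d})^nG\big)>0 .
\]
Here $P_0$ is precisely Lemma~\ref{savior}, and $P_d$ is the desired assertion (empty rectangle). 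For the step $P_{k-1}\Rightarrow P_k$ I would disintegrate $G$ over its first coordinate by Fubini,
\[
\mu^{\otimes k}\big(G\cap(T^{v_{d-k+1}}\times\cdots)^nG\big)=\int_X\mu^{\otimes(k-1)}\big(G_y\cap(T^{v_{d-k+2}}\times\cdots)^nG_{T^{-v_{d-k+1}n}y}\big)\,d\mu(y),
\]
and apply a Lusin/density argument to the measurable map $y\mapsto\mathbf 1_{G_y}\in L^1(X^{k-1})$ to produce a \emph{fixed} fiber $F:=G_{y^\ast}$ of positive finite measure and, for each $\varepsilon>0$, a positive‑finite‑measure set $Y_\varepsilon\subseteq X$ of first coordinates on which $\mu^{\otimes(k-1)}(G_y\triangle F)<\varepsilon$. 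For $y\in Y_\varepsilon\cap T^{v_{d-k+1}n}Y_\varepsilon$ both $G_y$ and $G_{T^{-v_{d-k+1}n}y}$ are within $\varepsilon$ of $F$, so the inner integrand is at least $\mu^{\otimes(k-1)}\big(F\cap(T^{v_{d-k+2}}\times\cdots)^nF\big)-2\varepsilon$; hence $\mu^{\otimes k}(G\cap\cdots)\ge\mu(Y_\varepsilon\cap T^{v_{d-k+1}n}Y_\varepsilon)\,\big(\mu^{\otimes(k-1)}(F\cap\cdots)-2\varepsilon\big)^+$, and multiplying by $\prod_{i=1}^{d-k}\mu(A_i\cap T^{v_in}A_i)$ brings us into the scope of $P_{k-1}$ applied to the rectangle $A_1\times\cdots\times A_{d-k}\times Y_\varepsilon$ and the set $F$. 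Along the way one uses that $v$-multiplicative-positive type passes to the subvector $(v_2,\dots,v_d)$ (so the lower‑dimensional products never degenerate), and one keeps Lemma~\ref{pain} in reserve to reduce, where needed, to the case in which the powers of $T$ act ergodically on their pieces.

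\emph{Main obstacle.} The delicate point — the technical heart, and where the Aaronson--Nakada technique is adapted — is legitimizing ``$\varepsilon$ small enough'': the slicing error $2\varepsilon$ must be dominated by a $\limsup$ that carries no a priori quantitative lower bound, and one cannot naively let $\varepsilon\to0$, since shrinking $Y_\varepsilon$ may shrink the relevant $\limsup$ at the same rate. The resolution is to freeze all the data before choosing $\varepsilon$ (fix $y^\ast$, hence $F$, hence $L':=\limsup_n\mu^{\otimes(k-1)}(F\cap\cdots)>0$, and apply $P_{k-1}$ to a fixed pair), and then to exploit the feature of $v$-multiplicative-positive type inherited through $P_{k-1}$: the coordinatewise intersections are simultaneously bounded below along a \emph{single common} sequence $n_\ell\to\infty$. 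That common sequence is what allows the first‑coordinate return $\mu(Y_\varepsilon\cap T^{v_{d-k+1}n_\ell}Y_\varepsilon)$ and the fiber return $\mu^{\otimes(k-1)}(F\cap\cdots)^{n_\ell}$ to be kept positive together, and — carried out carefully, with $\varepsilon$ chosen only after the already‑fixed quantities — it closes the inductive step.
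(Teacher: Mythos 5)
Your forward direction is exactly the paper's. Your reverse direction, however, takes a genuinely different route and has a gap at precisely the point you flag as the ``main obstacle,'' and the fix you sketch does not close it. The circularity is this: the set $Y_\varepsilon$ of good first coordinates, and hence the quantity $\delta(\varepsilon):=\limsup_n\bigl(\prod_i\mu(A_i\cap T^{v_in}A_i)\bigr)\mu(Y_\varepsilon\cap T^{v_{d-k+1}n}Y_\varepsilon)\,\mu^{\otimes(k-1)}(F\cap\cdots)$ supplied by $P_{k-1}$, both depend on $\varepsilon$. Along the sequence $n_\ell$ realizing $\delta(\varepsilon)$ you only get $\mu^{\otimes(k-1)}(F\cap\cdots^{n_\ell}F)\ge c(\varepsilon):=\delta(\varepsilon)/\bigl(\prod_i\mu(A_i)\cdot\mu(Y_\varepsilon)\bigr)$, and you need $2\varepsilon<c(\varepsilon)$. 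You cannot ``freeze all the data before choosing $\varepsilon$'' because $Y_\varepsilon$ is part of the data; and the sequence realizing $L'=\limsup_n\mu^{\otimes(k-1)}(F\cap\cdots)>0$ for $F$ alone need not make the first-coordinate return $\mu(Y_\varepsilon\cap T^{v_{d-k+1}n}Y_\varepsilon)$ positive. Positive type carries no uniform lower bound of the form $\limsup_n\mu(Y\cap T^{mn}Y)\ge\phi(\mu(Y))$ (that would essentially require an $\alpha$-type hypothesis), so $\delta(\varepsilon)$ may decay faster than $\varepsilon$ as $Y_\varepsilon$ shrinks, and the inductive step does not close as written.

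The paper avoids this $\varepsilon$-bookkeeping entirely by a structurally different argument: it takes the maximal ``zero-type'' set $Z$ for $S=T^{v_1}\times\cdots\times T^{v_d}$, observes that $Z$ is invariant under each $R_i=I\times\cdots\times T^{v_i}\times\cdots\times I$, so that the fibers $V_x^1=p_1(\pi_1^{-1}(x))$ of $Z$ are genuinely $T^{v_1}$-\emph{invariant} sets (not merely approximable ones). Lemma~\ref{pain} then says every positive-measure $T^{v_1}$-invariant set contains one of finitely many fixed atoms $B,TB,\ldots,T^{\ell-1}B$, so a positive-measure family of fibers shares a common atom; iterating over coordinates produces a positive-measure rectangle inside $Z$, contradicting Lemma~\ref{savior}. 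The exact (mod null) containment of a fixed set in each fiber is the rigidity your Lusin approximation lacks, and it is what makes the quantitative problem disappear. If you want to rescue your slicing scheme, you would need to import something of this kind -- e.g.\ first pass to the zero-type set $Z$ so that the fibers you slice are invariant -- at which point you have essentially rederived the paper's proof.
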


\begin{proof}
One direction is clear. If $T^{v_1}\times\ldots\times T^{v_n}$ is positive type, then in particular it is positive type for the set $A\times\ldots\times A$ for each $A\subset X$ of positive finite measure, so $T$ clearly is $v$-multiplicative-positive type with respect to each of these $A$.\\ \\
 For the other direction, starting from the assumption that $T$ has $v$-multiplicative-positive type, we proceed as follows.
Let $S:=T^{v_1}\times...\times T^{v_d}$ be a measure preserving transformation on the space $(X^d, \mathcal{B}^d, \mu^d= \mu \times \ldots \times \mu).$ To show that $S$ has positive type, define the set $$\mathcal{W}:= \left\lbrace 
B \in \mathcal{B}^d : 0<\mu^d(B)<\infty, \lim\mu^d(B\cap S^nB)=0\right\rbrace.$$
Then, by a standard argument, we can find a set $Z$ which is a countable union of sets in $\mathcal{W}$ such that every set in $\mathcal{W}$ is contained up to a set of measure zero in $Z$. We observe that $$\lim \mu^d(B\cap S^nC)=0 $$ for all finite measure sets $B,C \in \mathcal{W}$. This can be seen in a short number of steps:
If $\limsup \mu^d(B\cap S^n C)>0$, we could find a sequence $\{n_i\}$ satisfying $\mu^d(B\cap S^{n_i} C)> \epsilon$ for a fixed $\epsilon>0$ and then as $B$ and $C$ are each of finite measure, we could find $N\in n_i$ and a subsequence $\{n_j'\}$ such that $\mu^d(B\cap S^N C \cap S^{n'_j} C)>\epsilon'$ for some fixed $\epsilon'>0$. Of course, then we have that $\limsup \mu^d(S^N C \cap S^{n'_j} C)\ge \epsilon'$, which is a contradiction of the defining property of $\mathcal{W}$.

This fact gives us that if $A, B\in \mathcal{W}, $then $A\cup B\subset \mathcal{W}$. If $A\subset Z$ has $\mu(A)<\infty$, then it can be approximated  up to epsilon measure by sets in $\mathcal{W}$, which then quickly implies that $A\in \mathcal{W}$. So, the argument above gives that $\lim \mu^d(B\cap S^nC)=0 $ for $B, C\subset Z$ where $\mu(B), \mu(C)<\infty$. \\

Returning to the main proof, it suffices to show that if $T$ is $v$-multiplicative-positive type, $\mu(Z)=0$. We will stop briefly to note that if $T^{v_i}$ is ergodic for some $1\le i\le d$, then the argument in \cite[Proposition 2.2 ]{AN00} can be used to finish our proof. However, the following proof is more general, working even when $T^{v_i}$ is non-ergodic for all $i$. To continue, assume for the sake of contradiction that $Z$ has positive measure in $X^d$.  \\ 

If we define $R_i:= I\times...\times T^{v_i}\times...\times I$, then for any $A\in \mathcal{W}$,$$\mu^d (R_i(A) \cap SR^i(A))=\mu^d(A\cap S)=0$$ so that $R_iA\in \mathcal{W}$ and therefore $R_i Z=Z\mod \mu$.

 Then define  $p_i: Z \to X, $ and $\pi_i: Z \to X^{d-1}$ with $p_i((x_1, \ldots, x_d) = x_i$ and $\pi_i((x_1, \ldots, x_d))= (x_1, \ldots, x_{i-1}, x_{i+1}, \ldots, x_d)$. We start an inductive process, beginning with $i=1$. First note that for any measurable subset $A$, by our restrictions at the beginning of the paper, we can conclude that $p_i(A)$ is measurable. For $x\in \pi_1(Z)$, we define $V^1_x:= p_1(\pi_1^{-1}(x))$, a measurable set. $V_x^1$ is a $T^{v_1}$-invariant subset of $X$, and so if it is of positive measure, applying Lemma \ref{pain} to $k=v_1$, it contains one $T^{j(x)}B_1$of $B_1, T^1 B_1, \ldots, T^{\ell-1}B$. If $D_1\subset \pi_1(Z)$ is the subset containing $x$ such that $\mu(V_x^1)>0$, then as $Z$ has positive measure, $\mu^{d-1}(D_1)>0$. We then have a map $f: D_1 \to \{1, \ldots, \ell\}$. There must be some $j$ such that $f^{-1}(j)$ has positive measure. In fact, define $C_1=\{ x\in D_1\text{ such that } T^jB\subset V^1_x\}$, where $f^{-1}(j)\subset C_1$, and we then define $A_1=T^{j}B_1$ and note that $Z_1=A_1\times C_1\subset Z$ has positive measure. Furthermore, for $i\ge 2$, we see that $R_i(Z_1)=Z_1$, and so we can proceed inductively to find $Z_d \subset Z_{d-1} \subset \ldots \subset Z_1 \subset Z$ where $Z_d$ has positive measure and in fact $A_1\times \ldots \times A_d \subset Z_d\subset Z$. For each $i$, take a subset of positive, finite measure $A'_i\subset A_i$.\\
\indent
By the Lemma \ref{savior} and the fact that $T$ is $v$-multiplicative-positive type, we can conclude that $\limsup \mu^d(A'_1\times \ldots \times A'_d \cap S(A'_1\times \ldots \times A'_d))>0$ so that $\Pi_i A'_i\not\subseteq Z$. Thus, by contradiction, $Z$ is trivial, and so $S$ is positive type.

\end{proof}

\noindent Therefore, $v$-multiplicative-positive type of $T$ is just another name for positive type of $T^{v_1}\times\ldots \times T^{v_d}$. All results obtained for positive type transformations apply. For example, if $T^{v_1} \times \ldots \times T^{v_d}$ is ergodic, a transformation is either $v$-multiplicative-positive type or $v$-multiplicative-zero type. For this reason, the remainder of the paper will focus primarily on $v$-positive type, a property that is less well understood.

\section{Rank-One Constructions and Approximation}\label{S:approx}

Rank-one transformations are a much-studied type of transformation, easily described by defining the transformation iteratively on intervals. We begin with a review of the cutting and stacking method of constructing rank-one examples and then proceed to a number of lemmas which that assist the move from considering columns and levels to considering arbitrary positive measure sets, as far as it can be accomplished.

A column, denoted here by $C$, is an ordered set of $h>0$ pairwise disjoint intervals in $\mathbb{R}$ of the same measure. (Cutting and stacking can be adapted to construct nonsingular examples, but we do not consider them here.) These intervals are called levels, and $C$ is said to have height $h$. We consider these levels to be ``stacked'' so that the element $i+1$ is directly above $i$, for all $0\leq i \leq h-2$. For a level $I$ in a column $C$, denote by $h(I)\in\{0,\ldots,h-1\}$ the height of $I$, its position in $C$. Let $w(C)$ denote the measure of each level in $C$. We define the column map $T_C$ to map a point in the level $i$ , $0\leq i\leq h-2$, to the point directly above in the level $i+1$. Thus, if we let $J$ to be the bottom level of the column $C$, then the $i$th level will be $T_C^i(J)$. $T_C$ brings the bottom $h-1$ levels to the top $h-1$ levels. A cutting and stacking construction for a measure-preserving transformation $T:X \to X$ consists of a sequence of columns $C_n=\left\lbrace J_n,...,T^{h_n-1}J_n\right\rbrace $ of height $h_n$ such that:

(i) $J_n$ is a disjoint union of elements from $\left\lbrace J_{n+1},...,T^{h_{n+1}-1}J_{n+1}\right\rbrace $.

(ii) $C_{n+1}$ is obtained from $C_n$ by cutting $C_n$ into $r_n\geq 2$ subcolumns $C_n^i$ of equal measure, putting a number of spacers (new levels of the same measure as any of the levels in the $r_n$ subcolumns) above each subcolumn, mapping the top level of each subcolumn to the spacer above it, and stacking left under right. In this way, $C_{n+1}$ consists of $r_n$ copies of $C_n$, possibly separated by spacers, which are also included in $C_n$.

(iii) $\bigcup_n  C_n$ is a generating subalgebra of the Borel sets $\mathcal{B}$. In our case, this usually requires that there be enough spacers for $\bigcup_n  C_n$ to have infinite measure.

(iv) The pointwise limit of $T_{C_n}$ as $n$ increases is $T$. \\

To discuss these cutting and stacking constructions, we introduce the idea of {\bf descendants}. Given a level $J$ $(J\in C_n)$ and any column $C_m$ where $m\ge n$, we define the $m$-descendants of $J$ to be the collection of levels in $C_m$ whose disjoint union is $J$. We denote this set by $D(J, m)$. Occasionally, we will also use $D(J, m)$ to refer to the heights of the $m$-descendants of $J$. (For more results using this notion of descendants, see \cite{SMALL}.) An important observation is the following:

\begin{lemma}
Suppose that $T$ is a rank-one transformation on an infinite measure space $X$, and that $J$ is a level of the $j$th column. Then $\mu(J\cap T^nJ)>0$ if and only if there exists $m$ such that $n \in D(J,m)-D(J,m)$. 
\end{lemma}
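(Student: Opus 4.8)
The plan is to unwind the definition of descendants and track how the rank-one transformation $T$ moves levels of $C_j$ through the later columns. First I would fix the level $J \in C_j$ and observe that, since $\bigcup_n C_n$ generates $\mathcal B$, any positive-measure intersection $\mu(J \cap T^n J) > 0$ is ``witnessed'' at some finite stage: there is an $m \ge j$ and a level $L \in D(J,m)$ (an $m$-descendant of $J$) with $\mu(L \cap T^n J) > 0$, and in fact we may refine further so that $T^n$ carries a positive-measure piece of one $m$-descendant of $J$ into another $m$-descendant of $J$. This uses that the column map $T_{C_m}$ agrees with $T$ on all but the top level of $C_m$, together with the fact that approximating $J$ by descendants at a high enough level makes the ``error levels'' (tops of columns, where $T \ne T_{C_m}$) negligible.

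The key computational step is then: inside the column $C_m$, the map $T$ sends the level at height $h$ to the level at height $h+1$ (for $h$ below the top). So if $L, L' \in D(J,m)$ have heights $h(L), h(L') \in D(J,m)$ (viewing $D(J,m)$ now as a set of heights), then $T^n$ maps $L$ onto a subset of $L'$ precisely when $n$ equals the height difference $h(L') - h(L)$, i.e. when $n \in D(J,m) - D(J,m)$. For the converse direction, if $n = h(L') - h(L)$ for two $m$-descendant heights, then $T^n L \subseteq L'$ exactly (not just up to measure zero), since within a single column the iterates of $T$ below the top are literally the stacking maps; hence $\mu(J \cap T^n J) \ge \mu(T^n L) = w(C_m) > 0$.

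The main obstacle is the forward direction, specifically handling the discrepancy between $T$ and the column map $T_{C_m}$ at the top level of $C_m$: a point could leave a descendant level of $J$ ``out the top'' of $C_m$ and re-enter $J$ only after passing through spacers and the re-stacking that builds $C_{m+1}$ from $C_m$. The way around this is the standard one: given $n$, choose $m$ large enough that $w(C_m)$ is small relative to $\mu(J \cap T^n J)$, and note that for a point $x \in J$ with $T^n x \in J$, the orbit segment $x, Tx, \ldots, T^n x$ stays within a single column $C_m$ for all but a vanishing-measure set of such $x$ (those whose orbit segment hits a top level of $C_m$ at one of the $n+1$ times); on the remaining positive-measure set, $T^n$ acts as the intra-column stacking map, forcing $n \in D(J,m) - D(J,m)$. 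I would also note that monotonicity of descendants under refinement ($D(J,m') - D(J,m') \supseteq D(J,m) - D(J,m)$ for $m' \ge m$, after rescaling by the cutting) makes the choice of $m$ harmless.
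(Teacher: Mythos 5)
Your proof is correct and follows essentially the same route as the paper's: reduce to a single column $C_m$, where $J$ and $T^nJ$ are unions of levels at heights $D(J,m)$ and $D(J,m)+n$, so that a positive intersection is equivalent to $n\in D(J,m)-D(J,m)$, with the converse read off from $T^nL=L'$ for descendant levels at heights differing by $n$. The only real difference is how you ensure $T^n$ acts as the intra-column shift: the paper chooses $m$ with $\max(D(J,m))+n\le h_m$, whereas you bound the exceptional set of orbit segments hitting the top level of $C_m$ by $(n+1)w(C_m)$ and use $w(C_m)\to 0$; both work, and your variant has the mild advantage of not presupposing that an $m$ with enough room above the descendants exists.
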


\begin{proof}

First, suppose that $\mu(J \cap T^n J)>0$ for some $n>0$. Choose $m$ such that $\max(D(J,m)) + n \leq h_m$. Then we can write $J= \cup_{D(J,m)}J^{l}$, where the union is after all the $m$-descendants of $J$. Further, write $J^{l}= T^{h(l)}I$, where $I$ is the bottom level of $C_j$. Then the condition $\mu(J\cap T^nJ)>0$ becomes $$\mu((\cup T^{h(l)}I)\cap (\cup T^{n+h(l)}I))>0,$$ which implies that $D(J,m) \cap (n+D(J,m)) \neq \emptyset$, that is, $n \in D(J,m)-D(J,m)$.
\end{proof}

Observe that actually we can compute the exact value of $\mu(J\cap T^nJ)$ by using this notion of descendants. Indeed, for large enough $N$ there will be at least $n$ spacers on top of the last column and therefore $T^n$ will be defined in $C_N$ on each of the descendants of $J$. Thus, 

\begin{corollary}
Suppose that $T$ is a rank-one transformation on an infinite measure space $X$, and that $J$ is a level of the $j$th column. Then if $\mu(J\cap T^nJ)>0$, then for all large $N$, $$\mu(J\cap T^nJ)=\frac{|  D(J,N)\cap (D(J,N)+n)|}{|D(J,N)|} .$$
\end{corollary}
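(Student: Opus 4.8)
The plan is to leverage the preceding Lemma directly: it already identifies exactly which translation amounts $n$ give $\mu(J \cap T^n J) > 0$, namely those $n$ with $n \in D(J,m) - D(J,m)$ for some $m$, and moreover (as noted in the remark preceding the corollary) once $m$ is large enough that $T^n$ is defined on all of the $m$-descendants of $J$ inside $C_m$, the picture stabilizes. So the corollary is essentially a bookkeeping refinement of the lemma: I just need to compute $\mu(J \cap T^n J)$ exactly rather than merely decide positivity.

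First I would fix $n$ with $\mu(J \cap T^n J) > 0$ and choose $N$ large enough that there are at least $n$ spacers above $C_N$ (equivalently, $\max D(J,N) + n \le h_{N+1}$, or one may simply absorb the spacers into the count); this guarantees that $T^n$ is defined on every level of $C_N$ that is an $N$-descendant of $J$, and in particular on each such descendant. Next I would write $J = \bigsqcup_{l \in D(J,N)} J^l$ where $J^l = T^l I$ and $I$ is the bottom level of $C_N$ — so each $J^l$ is a single level of $C_N$ of measure $w(C_N)$, and $|D(J,N)|$ of them partition $J$, whence $\mu(J) = |D(J,N)| \, w(C_N)$. Then $T^n J = \bigsqcup_{l \in D(J,N)} T^{l+n} I$, again a disjoint union of levels of $C_N$ (using that $T^n$ is defined there). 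The intersection $J \cap T^n J$ is therefore a union of levels $T^k I$ with $k \in D(J,N) \cap (D(J,N) + n)$, each contributing measure $w(C_N)$, and distinct values of $k$ give genuinely disjoint levels. Hence $\mu(J \cap T^n J) = |D(J,N) \cap (D(J,N)+n)| \, w(C_N)$, and dividing by $\mu(J) = |D(J,N)| \, w(C_N)$ gives the claimed formula.

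The only genuine subtlety — and the one step I would be careful about — is making sure the levels $T^{l+n}I$ for $l \in D(J,N)$ really are honest, pairwise disjoint levels of $C_N$ (not spacer-truncated or running off the top of the column), which is exactly what the choice of $N$ with enough spacers buys us; this also justifies that overlaps $J^l \cap T^{l'+n}I$ are nonempty precisely when the heights coincide, i.e. $l = l' + n$, so the overlap set is counted by $D(J,N) \cap (D(J,N)+n)$ with no double-counting. I would also remark that the value of the ratio is independent of $N$ once $N$ is large enough, since passing from $C_N$ to $C_{N+1}$ replaces each descendant height $l$ by the heights of its children within a fixed translated pattern, scaling numerator and denominator by the same factor $r_N$; this is why the statement is phrased "for all large $N$." Everything else is the routine additivity of $\mu$ over disjoint levels.
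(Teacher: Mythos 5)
Your argument is correct and is essentially the paper's own: the paper proves this corollary by the one-line observation that for $N$ large enough there are at least $n$ spacers above $C_N$, so $T^n$ is defined on every $N$-descendant of $J$, and the intersection is then counted level-by-level exactly as in your decomposition $J=\bigsqcup_{l\in D(J,N)}T^{l}I$. Your derivation in fact yields $\mu(J\cap T^nJ)=|D(J,N)\cap(D(J,N)+n)|\,w(C_N)$ and then normalizes by $\mu(J)=|D(J,N)|\,w(C_N)$, which is the intended reading of the stated ratio, so nothing is missing.
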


\definition{A transformation $T$ is $v$-$\alpha$-type if for all sets $A$ of positive (finite) measure,$$\limsup \mu(A\cap T^{v_1n}A \cap \ldots \cap T^{v_d n}A)= \alpha \mu(A).$$}
This definition is inspired by \cite{HK64} and is useful for proving results allowing us to move from a sufficient semi-ring of intervals to all sets of positive measure. In fact, considering the larger class of transformations which have an $\alpha$ such that any set $A$ of positive (finite) measure has $$\limsup \mu(A\cap T^{v_1n}A \cap \ldots \cap T^{v_d n}A)\ge \alpha \mu(A)$$ gives us similar results, but we stick to the precedent. The concept of $\alpha$-type is also discussed more in a recent dissertation \cite{L09}.

Next, we have a basic analytic lemma. The exact statement of this lemma is from \cite{SMALL01}. 

\lemma{Double Approximation Lemma}\\
Let $\epsilon>0, \delta>0, 1>\tau>0$ and $I$ be an interval $\tau$-full of a measurable set $A$. If $\{r_n\}$ is an infinite sequence such that $r_n>1$ for large enough $n$, there exists an $N\in\mathbb{N}$ such that if $I_k=[\frac{k}{r_1\ldots r_N},\frac{k+1}{r_1\ldots r_N}]$ for $0\leq k\leq r_1\ldots r_N-1$, then there exists a subset $K\subset \{0,1,\ldots, (r_1\ldots r_N)-1\}$ with $|K|>(\tau-\delta)r_1\ldots r_N$ such that each $I_j, j\in K$, is $(1-\epsilon)$-full of $A$.\\

The next lemma is critical for verifying all of our later $v$-positive type examples.
\lemma{If $T$ is a rank-one transformation that is $v$-$\alpha$-type for all intervals that appear in a level of any of the defining columns, then $T$ is $v$-positive type.}
\begin{proof}
If $I, T^{v_1n}I,\ldots, T^{v_dn}I$ are in $C_N$ and are each $(1-\delta)$ full of $A$, then $I\cap T^{v_1n}I \cap \ldots \cap T^{v_d n}I$ is $(1-\delta (d+1))$-full of $A$. Then, given $I$ that is $(1-\delta)$-full of $A$, we can choose $\tau=(1-2\delta)$ so that beyond an N, $\tau$ of the subintervals of $I$ which appear as its descendants in later columns will be $(1-\delta)$-full. Then, it is a simple calculation to see that if $n$ is chosen so that $\mu(I\cap T^{v_1n}I\ldots \cap T^{v_dn})>(1-\epsilon)\alpha\mu(I)$, then the descendants will give that $$\mu(A\cap\ldots \cap T^{v_dn}A)\geq (1-\epsilon)(\alpha - 2\delta(d+1))(1-\delta(d+1))\mu(I)$$ 
Now, if we simply approximate all of $A$ with intervals $I$ that are $(1-\delta)$ of $A$ (choosing $\delta$ small enough so that $2\delta(d+1)<\alpha$), we have shown that $A$ has $v$-positive type. Of course we can do so and the result follows. 
\end{proof}

\lemma{If $T$ is a transformation that is $v$-$\alpha$-type along a fixed sequence (i.e. the $\limsup$ occurs along the same subsequence) for all sets in a sufficient semi-ring, then $T$ is $v$-$\alpha$-type for all sets.}\rm\\
The proof is essentially the same as the one given in \cite{S08}. This lemma will be the one used in all of the examples in this section, where the sufficient semi-ring is the set of all levels and the common subsequence will usually be some variant of $\{ h_n \}$.  

\lemma{If $T$ is $v$-zero type for all columns $C_m$, then $T$ is $v$-zero type.}\\ \rm \label{vzero}
If $T$ is infinite measure, the columns grow so that $X=\cup_m C_m$. For any set $A$ of positive finite measure and any $\epsilon>0$, we can pick $M$ such that $(A\cup\, ^cC_M)<\epsilon$. Then, $$\mu(A\cap T^{v_1n}A\ldots\cap T^{v_dn}A)< \mu(C_M\cap T^{v_1n}C_M \ldots \cap T^{v_dn}C_M) + (d+1)\epsilon$$
From this, we see that in fact $A$ has $v$-zero type. \\ \\

We can now exhibit examples of rank-one transformations that are $v$ positive type and others that are not $v$ positive type, which we do in sections four and five.

Note what these basic lemmas do not allow us to do. 
Moving from a sufficient semi-ring (usually composed of unions of intervals) to all finite positive-measure sets for $v$-positive type transformations is difficult without the additional control of $v$-$\alpha$-type.

\section{Rank-one Examples}\label{S:PTexamples}
Many of these examples are most easily seen visually, and we encourage the drawing of diagrams such as the ones shown below.

\example{For every vector $(v_1, \ldots, v_d)$, we can create a transformation $T$ that is $v$-positive type very simply. }\rm

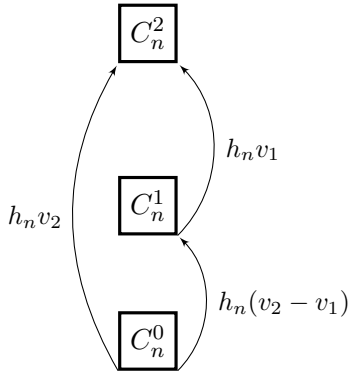
\begin{figure}[h]
\begin{tikzpicture}[->,>=latex',shorten >=1pt,auto,node distance = 1.8 cm]

\tikzstyle{clump}=[draw,very thick, fill=white, rectangle, font={\sffamily\small},minimum width=0.5 cm, minimum height=0.75 cm];
\tikzstyle{spacer}=[draw, very thick, fill, rectangle, minimum width=0.5 cm, minimum height=0.75 cm];
\tikzstyle{arrow}=[black, stateEdge, ->]
\tikzstyle{label} = [pos=0.5, text centered, font=\footnotesize];

     \draw  node[clump] (2)  {$C_n^2$};
     \draw  node[clump] (1) [below=1.5 cm of 2] {$C_n^1$} ;
     \draw node[clump] (0) [below of=1] {$C_n^0$};

     \path[->]
     	(1.south east) edge[bend right=45] node[label, xshift=2.5 em]{$h_nv_1$} (2.south east)
	(0.south west) edge[bend left=30] node[label, xshift=0 em]{$h_nv_2$} (2.south west)
	(0.south east) edge[bend right=45] node[label,xshift=5 em]{$h_n(v_2-v_1)$} (1.south east);     

\end{tikzpicture}
\caption{d=2 example\label{1}}
\end{figure}

\noindent An example for $d=2$ is given above. Let $T$ be a cutting and stacking transformation starting with $C_0$ consisting of one level: $[0,1]$. Then, for infinitely many $n$, cut the column $C_n$ into $d+1$ subcolumns and for $0\leq i\leq d-1$, on top of the $d-i$th subcolumn (called $C_n^{d-i+1}$) put $h_n(v_{i+1} - v_i-1)$ spacers ($v_0 = 0$).  The effect of the procedure is to put the first level of the $(d-i)$th subcolumn $h_n v_i$ levels away from the first level of the $(d+1)th$ subcolumn. 
As long as this is done infinitely often, at $\{n_k\}$, then $\mu(I\cap T^{h_{n_k}v_1}I \cap \ldots \cap T^{h_{n_k}v_d}I)> \frac{\mu(I)}{d+1}$, when $I$ is any level of any of the defining columns of $T$. By the lemma above, as this gives that $T$ is $v$-$\frac{1}{d+1}$-type for all of these intervals, we have that in fact $T$ is $v$-positive for all sets of positive measure. \\
\indent
This argument is very flexible, as we can add however many spacers we want after the last subcolumn and as we need only perform this procedure infinitely often, not at every step. For example, the construction given in \cite{SMALL99} describes how to make a rank-one power weakly mixing transformation with a procedure that needs similarly only occur infinitely often, and this procedure could be combined with the procedure in this example to create a rank-one transformation that is both power weakly mixing and $v$-positive type. Furthermore, as the set of finite integer-valued vectors is countable, we can use this same basic construction to create a rank-one transformation that is $v$-positive type for each $v\in \mathbb{Z}^d$ (for all values of $d>1$) and additionally power weakly mixing. \\

Next, we show that $v$-positive type and $v$-multiplicative-positive type are not equivalent. Before tackling a general example, here's a specific one.
\example{Let $v=(1,2)$. We construct a rank-one transformation $T$ that is $v$-multiplicatively positive type (hence positive type) but not $v$-positive type.}
\rm\\ \indent Start out with $C_0 = I = [0,1]$. At each step cut into 4 subcolumns. On the first subcolumn, put no spacers. On the next column, put $3h_n$ spacers. On the third subcolumn put $h_n$ spacers. On the last subcolumn put $8h_n$ spacers. Now, this is clearly $v$-multiplicative type for $I$ (just take overlap $T^{v_im}$ for $m=h_n$) and similarly for all interval levels. As the construction has the overlap occurring at $\frac{1}{4} \mu(I)$, $T$ is $v$-multiplicative-positive type for all positive measure sets. See Figure \ref{basic}.\\ \\

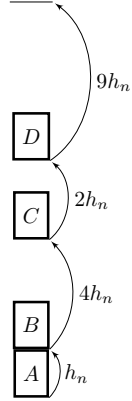
\begin{figure}[h]
\resizebox{!}{5.5cm}{
\begin{tikzpicture}[-,>=latex',shorten >=1pt,auto,node distance = 1.8 cm]

\tikzstyle{clump}=[draw,very thick, fill=white, rectangle, font=\footnotesize,minimum width=0.5 cm, minimum height=0.75 cm];
\tikzset{blank/.style={
        draw=none, rectangle, minimum width=0.7 cm, minimum height = 0 cm,
        append after command={
            [very thick,shorten >=0.2bp, shorten <=0.2bp]
            (\tikzlastnode.south west)edge[-](\tikzlastnode.south east)}}}

\tikzstyle{arrow}=[black, stateEdge, ->]
\tikzstyle{label} = [pos=0.5, text centered, font=\footnotesize];

	\draw node[blank] (4) {};
	\draw node[clump] (3) [below=1.8 cm of 4] {$D$};
     \draw  node[clump] (2)  [below=0.5 cm of 3] {$C$};
     \draw  node[clump] (1) [below=1 cm of 2] {$B$} ;
     \draw node[clump] (0) [below=0 cm of 1] {$A$};
     
     \path[->]
     	(3.south east) edge[bend right=55] node[label, xshift=2 em]{$9h_n$} (4.south east)
     (2.south east) edge[bend right = 50] node[label, xshift=2 em]{$2h_n$} (3.south east)
     	(1.south east) edge[bend right=45] node[label, xshift=2 em]{4$h_n$} (2.south east)
	(0.south east) edge[bend right=45] node[label,xshift=1.5 em]{$h_n $} (1.south east);     
\end{tikzpicture}
}
\caption{The basic transformation\label{basic}}
\end{figure}

 \indent To show $T$ is not $v$-positive type, we show this for $I$. For each $m$, consider $n$ where $h_{n-1} < m\leq h_n=16h_{n-1}$. Then, to show that $\mu(I\, \cap \,T^mI\, \cap\, T^{2m}I)=0$, we look at the four subcolumns $A, B, C, D$ of $C_{n-1}$ (not including spacers). It is clear that any nonzero intersection of $I, T^m I$, and $T^{2m} I$ will happen between two or more of the above subcolumns but in no case purely within one subcolumn. Therefore, it is just a matter of tracking down the different possible combinations of subcolumns and showing that there can be no overlap. We show these different scenarios with corresponding pictures. To start, we note that $\mu(D \,\cap\, T^m C \,\cap\, T^{2m} I)=0$ as levels in $D$ and $C$ that are both in $I$ are at most $2h_{n-1}+8h_{n-2}$ apart and levels in $D$ and $B$ that are both in $I$ are at least $5h_{n-1}+8h_{n-2} > 2(2h_{n-1} + 8h_{n-2})$ apart. Thus no possible $m$ can give this type of overlap. See Figure \ref{2}. \\ 
 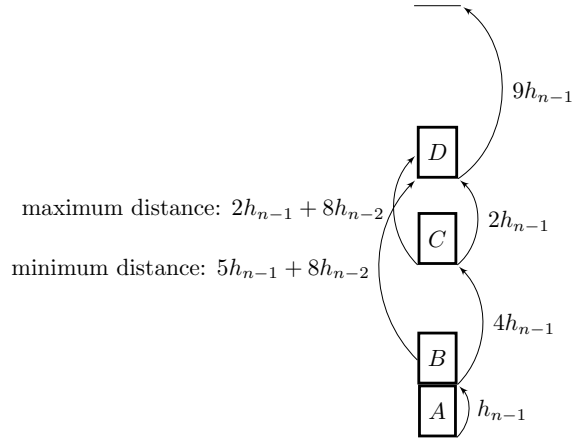
\begin{figure}[h]
 \resizebox{!}{6cm}{
\begin{tikzpicture}[-,>=latex',shorten >=1pt,auto,node distance = 1.8 cm]

\tikzstyle{clump}=[draw,very thick, fill=white, rectangle, font=\footnotesize,minimum width=0.5 cm, minimum height=0.75 cm];
\tikzset{blank/.style={
        draw=none, rectangle, minimum width=0.7 cm, minimum height = 0 cm,
        append after command={
            [very thick,shorten >=0.2bp, shorten <=0.2bp]
            (\tikzlastnode.south west)edge[-](\tikzlastnode.south east)}}}

\tikzstyle{arrow}=[black, stateEdge, ->]
\tikzstyle{label} = [pos=0.5, text centered, font=\footnotesize];

	\draw node[blank] (4) {};
	\draw node[clump] (3) [below=1.8 cm of 4] {$D$};
     \draw  node[clump] (2)  [below=0.5 cm of 3] {$C$};
     \draw  node[clump] (1) [below=1 cm of 2] {$B$} ;
     \draw node[clump] (0) [below=0 cm of 1] {$A$};
     
     \path[->]
     	(3.south east) edge[bend right=55] node[label, xshift=3 em]{$9h_{n-1}$} (4.south east)
     (2.south east) edge[bend right = 50] node[label, xshift=3 em]{$2h_{n-1}$} (3.south east)
     	(1.south east) edge[bend right=45] node[label, xshift=3 em]{4$h_{n-1}$} (2.south east)
	(0.south east) edge[bend right=45] node[label,xshift=2.5 em]{$h_{n-1} $} (1.south east)
	(1.mid west) edge[bend left=45] node[label, xshift=0 em]{minimum distance: $5h_{n-1}+8h_{n-2}$} (3.south west)
	(2.south west) edge[bend left=45] node[label, xshift=0 em]{maximum distance: $2h_{n-1}+8h_{n-2}$} (3.mid west);

\end{tikzpicture}
}
\caption{Disproving nontrivial $D \,\cap\, T^m C \,\cap\, T^{2m} I$ overlap\label{2}}
\end{figure}

 \indent
For the second scenario, we consider $B \cap T^mA \cap T^{2m}I$ overlap.
Although $m\le h_n$, we might have overlap occurring in a larger picture, as illustrated below with Figure \ref{3x}. Here the maximum distance available ($h_{n-1} + 8h_{n-2}$) is even smaller than that considered in the first case and the minimum distance ($h_{n-1} + 8h_{n-2}$) is even larger, so the same argument as before shows that there must be zero overlap incorporating a smaller subcolumn.

  \begin{figure}[h]
    \resizebox{!}{5.5cm}{
\begin{tikzpicture}[-,>=latex',shorten >=1pt,auto,node distance = 1.8 cm]

\tikzstyle{clump}=[draw,very thick, fill=white, rectangle, font=\footnotesize,minimum width=0.5 cm, minimum height=0.75 cm];
\tikzset{blank/.style={
        draw=none, rectangle, minimum width=0.7 cm, minimum height = 0 cm,
        append after command={
            [very thick,shorten >=0.2bp, shorten <=0.2bp]
            (\tikzlastnode.south west)edge[-](\tikzlastnode.south east)}}}

\tikzstyle{arrow}=[black, stateEdge, ->]
\tikzstyle{label} = [pos=0.5, text centered, font=\footnotesize];

     \draw  node[clump] (2)  {$C$};
     \draw  node[clump] (1) [below=1 cm of 2] {$B$} ;
     \draw node[clump] (0) [below=0 cm of 1] {$A$};
     \draw node[clump] (-1) [below=2 cm of 0] {$D'$};
     
     \path[->]
     	(1.south east) edge[bend right=45] node[label, xshift=3 em]{4$h_{n-1}$} (2.south east)
	(0.south east) edge[bend right=45] node[label,xshift=2.5 em]{$h_{n-1} $} (1.south east)
	(-1.mid west) edge[bend left=45] node[label, xshift=0 em]{minimum distance: $8h_{n-1}+8h_{n-2}$} (0.south west)
	(0.south west) edge[bend left=45] node[label, xshift=0 em]{maximum distance: $h_{n-1}+8h_{n-2}$} (1.mid west)
	(-1.south east) edge[bend right=55] node[label, xshift=3 em]{$9h_{n-1}$} (0.south east);

\end{tikzpicture}}
\caption{Disproving nontrivial $B \cap T^mA \cap T^{2m}I$ overlap\label{3x}}
\end{figure}

\indent
 The impossibility of the third scenario, requiring $\mu((C \cup D) \cap T^m(A\cup B) \cap T^{2m}I)=0$, follows very similarly and is shown in Figure \ref{5}. 
 
   \begin{figure}[h]
     \resizebox{!}{6.5cm}{
\begin{tikzpicture}[-,>=latex',shorten >=1pt,auto,node distance = 1.8 cm]

\tikzstyle{clump}=[draw,very thick, fill=white, rectangle, font=\footnotesize,minimum width=0.5 cm, minimum height=0.75 cm];
\tikzset{blank/.style={
        draw=none, rectangle, minimum width=0.7 cm, minimum height = 0 cm,
        append after command={
            [very thick,shorten >=0.2bp, shorten <=0.2bp]
            (\tikzlastnode.south west)edge[-](\tikzlastnode.south east)}}}

\tikzstyle{arrow}=[black, stateEdge, ->]
\tikzstyle{label} = [pos=0.5, text centered, font=\footnotesize];

	\draw node[clump] (3){$D$};
     \draw  node[clump] (2)  [below=0.3 cm of 3] {$C$};
     \draw  node[clump] (1) [below=0.6 cm of 2] {$B$} ;
     \draw node[clump] (0) [below=0 cm of 1] {$A$};
     \draw node[clump] (-1) [below=2 cm of 0] {$D'$};
     
     \path[->]
     (2.south east) edge[bend right = 50] node[label, xshift=3 em]{$2h_{n-1}$} (3.south east)
     	(1.south east) edge[bend right=45] node[label, xshift=3 em]{4$h_{n-1}$} (2.south east)
	(0.south east) edge[bend right=45] node[label,xshift=2.5 em]{$h_{n-1} $} (1.south east)
	(-1.mid west) edge[bend left=45] node[label, xshift=0 em]{minimum distance: $8h_{n-1}+8h_{n-2}$} (0.south west)
	(0.south west) edge[bend left=45] node[label, xshift=0 em]{maximum distance: $7h_{n-1}+8h_{n-2}$} (4.mid west)
	(-1.south east) edge[bend right=55] node[label, xshift=3 em]{$9h_{n-1}$} (0.south east);

\end{tikzpicture}}
\caption{Disproving nontrivial $(C \cup D) \cap T^m(A\cup B) \cap T^{2m}I$ overlap\label{5}}
\end{figure}
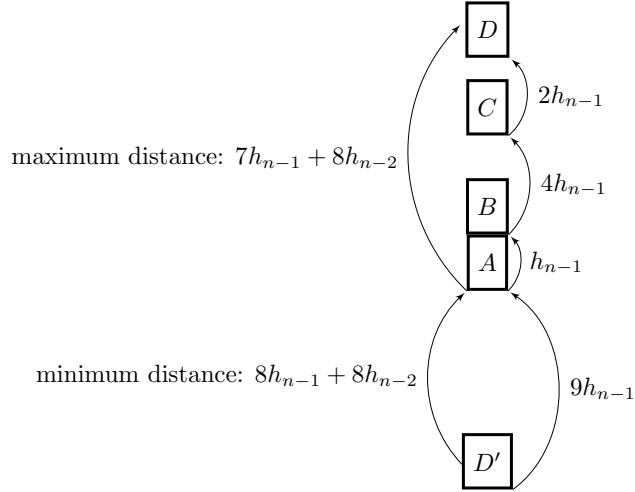
 
Tracking down each of these scenarios gives a contradiction requiring the intersection to have measure 0, and so we conclude that for $h_{n-1}\le m \le n$,  $\mu(I\,\cap\, T^m I \,\cap\, T^{2m}I)=0$. Unlike the previous example, there is little flexibility, as if one were to perform cutting and stacking steps of a different type, one would need to check that these additional steps do not incidentally bestow $v$-positive type. However, 
note that the proof above actually shows that $T$ is $v$-positive type for no $A$ of finite measure as we can choose a finite union of levels $J\subset C_n$ such that $A\subset J$ (within $\epsilon$) and then for $m>h_n$, we have $\mu(J\cap T^mJ\cap T^{2m}J)=0$ by the argument above.\\ \\ 

Now, the above example generalizes to the following example: \example{For every vector $v$ with $\frac{v_d}{v_1}\geq d$, we can create a rank-one transformation $T$ that is $v$-multiplicative-positive type but not $v$-positive type. }\rm \\ \indent This transformation is in a spirit similar to the previous one. To create $T$ with cutting and stacking, let $C_0 = [0,1]$. At \emph{each} \rm step, cut into $2d$ subcolumns. Then, on top of the $2i-1$th subcolumn $1\leq i\leq d$, place $(v_i -1)h_n$ spacers. On top of the $2i$th subcolumn ($i<d$), place $M$ spacers, where $M\geq v_d/v_1(v_d+1)v_d$ and $M\geq d(v_d+1)v_1/(v_d - v_1(d-1))$. On top of the last column, put $kh_n$ spacers so that the spacers added on top of that last subcolumn have measure equal to the sum of the measures of each subcolumn and of the spacers put on top of each of the previous subcolumns. It is fairly clear that $T$ is $v$-multiplicative type. Whenever $n=h_m$, the desired inequality will be achieved for any level interval. And by the lemma above that means $T$ is $v$-multiplicative type for any set of positive measure.\\ \indent To see that $T$ is not $v$-positive type, we primarily concentrate on the beginning level $I=C_0$. (Note that a failure for this interval will imply a failure for all other levels but perhaps not for other sets.) For any $m$, pick $n$ so $h_{n-1}<m\leq h_n$. Then, considering subcolumns $X_{2i}$ and $X_{2i-1}$, we note that if $\mu(X_{2i}\cap T^{mv_1}X_{2i-1})>0$, then the maximum value for $v_1m$ is $v_dh_{n-1}+kh_{n-2}\leq (v_d+1)h_{n-1}$ and then $T^{-v_dm}X_{2i}$ will be entirely contained within the spacers above $X_{2i-2}$. Then, if $\mu(X_i \cap T^{v_1m}X_j)>0$ for $X_i, X_j$ with at least $Mh_{n-1}$ spacers between them, we note that the maximum distance between two levels contained in $I$ is $h_{n-1}(M(d-1) + d(v_d+1))$ and so $T^{-v_dm}X_i$ is contained in more spacers, by the choice of $M$ detailed above. \\ \\

In fact, the above two examples are examples of rank-one transformations that are clearly positive type (as they are $v$-multiplicatively-positive type), but in fact we can show that they are $v$-zero type. The argument above showed that for $I=C_0$, $$\mu(I \cap T^{v_1 n} I \cap \ldots \cap T^{v_d n} I) \to 0$$

This argument generalizes easily to $I=C_k$ for any $k$. In fact, the argument given holds if we confine ourselves $m\ge h_N$ where $N\ge k$, and so the limit for any $I=C_k$ will still be zero, and by Lemma \ref{vzero}, we can conclude that $T$ is $v$-zero type.

\section{Multiple Recurrence and Positive Type}\label{S:Markov}
Just as the rank-one examples helped us distinguish $v$-positive type from $v$-multiplicative-positive type, Markov shift examples allow us to distinguish between $v$-recurrence and $v$-positive type. Multiple recurrence is a property famously studied in the finite measure case, culminating in Furstenberg's Multiple Recurrence Theorem. More recently, it has been studied in the infinite measure case, the reader may refer to \cite{AN00} for Markov shift examples and \cite{EHH98} for odometer (rank-one) examples. 

\begin{definition}
A transformation $T$ is $k$-recurrent if for every measurable set  $A$ of positive measure, there exists an integer $n>0$ such that \[\mu(A \cap T^{n} A \cap \ldots \cap T^{kn}A )>0\] and is multiply recurrent if $T$ is $k$-recurrent for every positive integer $k$. 
\end{definition}

Analogously, we define $v$-recurrence as the notion that for a given positive measure set $A$ and vector $v$, there exists an $n>0$ such that $\mu(A\cap T^{v_1n}A \cap\ldots \cap T^{v_d n}A)>0$. We must conclude that $v$-recurrence and $v$-positive type, although the latter implies the former, are different notions. An example showing this is given by a conservative ergodic Markov shift  such that $T\times T$ is conservative ergodic, while $T\times T\times T$ is not conservative \cite{KP63}. Then $T$ cannot have positive type, otherwise, by  \cite[Proposition 2.2]{AN00}, $T\times T\times T$ will have also positive type, so, in particular, it will be conservative (or more generally, such a  $T$ is remotely infinite  \cite{BF64}, so of zero type \cite{KS69}). $T$ is therefore not positive type and thus clearly not $(1,2)$ positive type. However, by  \cite[Theorem 1.1]{AN00}, $T\times T$ conservative is equivalent, for a general Markov shift $T$, to $(1,2)$ recurrence. This means that this above example is $(1,2)$ recurrent but not $(1,2)$-positive type.\\

\section{Results for More General Sequences}
This section contains some basic facts that encourage us to believe that the above definitions for $v$-positive type and $v$-multiplicative-positive type are some of the most natural properties to consider. Specifically, this section explores the quantities $\mu(A\cap T^{n_i} A)$ for a variety of different, more general sequence $\{n_i\}$.\\ 

First, we present a simple result indicating that considering sequences other than $\{kn\}$ may prove difficult: 
 \fact{For any $h$ there is an ergodic $T$ of positive type, measure preserving and invertible, such that for some $A$ of positive measure and for any $c$ not sharing all prime factors of $h$, $\lim_{n\to\infty} \mu(T^{hn+c}A\cap A ) = 0$.\\
 \indent To see this, we will construct a generalization of the 2-point extension. First, pick a prime factor $k$ of $h$ such that $k$ does not divide $c$. Then let $S:X\to X$ be any measure-preserving invertible positive type transformation. (Example: the canonical Hajian-Kakutani skyscraper.) Then, create $k$ copies of this measure space, labeling them $X_i$ and using the same coordinate system on each. Define $T$ as follows: for any $x_1\in X_1$, let $T(x_1)=x_1\in X_2$, using the same coordinates as mentioned. Similarly, continue up to $T^{k-1}(x_1)=x_1\in X_k$ and then let $T^k(x_1)=S(x_1)\in X_1$. Because $k$ is prime, $T$ is conservative ergodic by S's ergodicity. Then, if $A$ is a positive measure set so that $A\subset X_1$, it is very clear that for any $n$,$\mu(T^{kn+c}A\cap A )$, and so the same assertion with $h$ replacing $k$ must also hold. \\

 \fact{For any $k$ there exists $T$, $A$, $c$ such that$$\limsup \mu(A\cap T^nA\cap T^{kn}A) >0 \text{ but } \lim \mu(T^cA\cap T^nA\cap T^{kn}A)=0$$} To obtain an example, just take a rank-one construction where $r_n=k$ and add no stackers infinitely often to get the first inequality. However, pick $c$ to be some integer that is not expressible as $\sum_{n\leq N} a_n h_n$ where each $a_n\in \{-(k-1), \ldots (k+1)\}$. If we choose $h_n$ to be increasing quickly enough, there are some $c$ fitting this description. \\ 
  
Given these two facts, it is natural to restrict to considering intersections merely in the form $T^{kn}A\cap A$ without additional constants. Intersections of this form are also already studied in the context of recurrence, so this further seemed a natural choice. However, when still considering sequences in general, we did obtain the following two results:

\proposition{If  $\lim_{n\to \infty} \mu(A\cap T^{a_n}A\cap T^{a_n+b_n}A) = 0$ for every pair of infinite sequences $\{a_n\}, \{ b_n \}$ and every set $A$ of finite positive measure, then $T$ is zero type. } 
\begin{proof} The proof proceeds by using the contrapositive. Assume that $T$ is of positive type. Specifically, for any $A$ of positive, finite measure we can pick a sequence $\{ a_k \}$ so that  $$\lim_{n\to \infty} \mu(A\cap T^{a_n}A) = c> 0.$$ Then, we need to simply pick $\{ b_n \}$ and $\{a'_n\}$ such that $$\lim_{n\to \infty} \mu(A\cap T^{a'_n}A\cap T^{a'_n+b_n}A) > 0.$$ To see that this is possible, note that we may assume that $\mu(T^{a_n}A \cap A) \geq c'=\mu(A)/M$ for some large $M$. Then, for any beginning $N$, we note that $\mu(\bigcup_{n=N}^{N+2M} A\cap T^{a_n}A) \leq \mu(A)$ and additionally 
$$\mu(\bigcup_{n=N}^{N+2M} A\cap T^{a_n}A)\geq $$  $$ \sum_{n=N}^{N+2M}\mu(A\cap T^{a_n}A) \, -\sum_{N\leq n,m\leq N+2M} \mu(A\cap T^{a_n}A\cap T^{a_m}A)\geq $$
$$ 2\mu(A) \, -\sum_{N\leq n,m\leq N+2M} \mu( A\cap T^{a_n}A\cap T^{a_m}A)$$
 and
 so $$\sum_{N\leq n,m\leq N+2M} \mu(A\cap T^{a_n}A\cap T^{a_m}) \geq \mu(A)$$ and then there exist some $n,m$, $n\geq m$ in the given range such that $\mu( A\cap T^{a_n}A\cap T^{a_m}A) \geq \mu(A) / \binom{2M+1}{2}$ and then we can pick $a'_i=a_m$ and $b_i = a_n - a_m$ and then we can choose successive terms $a_{i+1}$ and $b_{i+1}$ by increasing $N$. Then, consequently, $$\limsup_{n\to \infty} \mu(A\cap T^{a'_n}A\cap T^{a'_n+b_n}A ) \geq \mu(A) / \binom{2M+1}{2}> 0.$$

\end{proof}

\proposition{ For any pair of infinite sequences $\{a_n\}, \{ b_n \}$, there is a $T$ of positive type such that  $\lim_{n\to \infty} \mu(A\cap T^{a_n}A\cap T^{a_n+b_n}A) = 0$}. In fact, this $T$ can be chosen to be a half-rigid rank-one transformation.
\begin{proof}
The transformation constructed will be a variant of the Hajian-Kakutani skyscraper. At each step of construction, we divide the column into two equal pieces each of height $h_n$ and stack an additional $c_n$ pieces on top of the rightmost column, where $c_n$ is to be determined. Then, we stack the columns, left under right, and continue. Thus, $h_{n+1} = 2h_n + c_n$ and so our only task is to pick a sequence $\{ h_n \}$ so that the transformation satisfies the above condition. We note that if $I=[0,1]$ is the initial column in the 0th step, then $\mu(I\cap T^{h_n} I) \ge \frac{1}{2}\mu(I)$ and so $T$ is clearly of positive type. In fact, in the nth column, the levels that are contained in $I$ occur at the following heights: $0, h_1, h_1, h_2, h_1+h_2, h_3, \ldots h_n+h_{n-1}+ \ldots h_1$ and therefore moving up by $h_n$ moves the bottom half of these levels up to the top half of these intervals. Let $I_n$ represent the levels in $I$ that occur in the $n$th column $C_n$.\\

\noindent Now, we pick $I=[0,1]$. Then $\lim_{n\to \infty} \mu(I\cap T^{a_n}I\cap T^{a_n+b_n}I) = 0$ will hold if for each $n$, the difference set of the descendants of $I$ in the $N$ column -- i.e. the set $\Delta(I, N) = D(I,N)- D(I,N)$ of numbers, each expressible as $\sum_{i=0}^N c_i h_i$ where each $c_i \in \{ -1, 0, 1\}$ -- contains for any $k$ at most two of $\{a_k, b_k, a_k+b_k\}$. For example, if for a particular $k$ the set $\Delta(I,N)$ contains $a_k$ and $a_k+b_k$ but not $b_k$, then $\mu(I \cap T^{a_k}I) \neq 0$ and $\mu( I\cap T^{a_k+b_k}I) \neq 0$ but $\mu(T^{a_k}I\cap T^{a_k+b_k}I)=\mu(I \cap T^{b_k}I)=0$ by the above discussion, and so $\mu( I \cap T^{a_k}I\cap T^{a_k+b_k}I)=0$. We made this argument for $I$, but to expand this conclusion to a general set $A$ of finite measure, we need to make this argument for any column $C_m$. Numbers in $D(C_m, N)-D(C_m, N)$ are not of the form $\sum_{i=0}^N c_i h_i$; instead they are of the form $\sum_{i=m}^N a_i h_i + b$ where $b$'s only restriction is that $|b|\le h_m$. So it is impossible to guarantee the given statement for $k$ with $a_k\le h_m$. However, because we care about the $\lim_{n\to \infty}$, we concern ourselves with $k$ large enough that $a_k, b_k \ge h_m$. So long as there are at most two of $\{a_k, b_k, a_k + b_k\}$ for these large $k$, the above argument gives us the statement for $C_m$. In other words, we will be operating at such a large scale that the column $C_m$ acts like a level. Now, it follows that the statement holds for any finite union of levels and thus, by an approximation argument, for any finite measure set $A$. 

To choose $h_n$ to avoid all of these individual terms, we first require that $h_n \geq 4h_{n-1}$ (i.e. $c_n \ge 2h_n$). This condition forces all numbers in $\Delta = \cup_{n} \Delta(I,N)$ to have a unique representation in the form $\sum_{i=0}^N a_i h_i$ where each $a_i \in \{ -1, 0, 1\}$. Additionally, it means that if $a_k\not\in \Delta(I, N-1)$ and $a_k \le 3h_N$, then $a_k \not\in \Delta(I, N)$. Relatedly, this tells us that if $a_k, b_k\in \Delta(I, N-1)$, either $a_k+ b_k\in\Delta(I, N-1)$ or $a_k +b_k \not\in\Delta(I, N)$. Then, if $a_k\in \Delta(I, N-1)$ but $b_k\not\in \Delta(I, N-1)$, we vary $c_{N-1}$, starting at $3h_{N-1}$ and increasing if necessary, so that $b_k\not\in\Delta(I,N)$. As we increase, we will certainly reach a value of $c_{N-1}$ such that $b_k\le 3h_N=3(2h_{N-1} + c_{N-1})$. But there are a variety of $k$ that we have to worry about. Specifically, let $$K_a = \{ k | a_k\in \Delta(I, N-1)\text{ but }b_k\not\in \Delta(I, N-1) \}$$ and let $$K_b = \{ k | b_k\in \Delta(I, N-1)\text{ but }a_k\not\in \Delta(I, N-1) \}.$$ Then both $K_a$ and $K_b$ are both finite and depend only on $h_i$, $i\le N-1$. As we vary $c_{N-1}$, these sets will not change. Additionally, they are both finite sets. That means that we can increase $c_N$ without worrying about adding to these sets. In particular, we can make $c_{N-1}$ big enough to miss all $a_k$ for $k\in K_b$ and $b_k$ for $k\in K_a$. Now, we might introduce new $a_j$, but either we introduce only $a_j$ or we introduce both $a_j, b_j\in \Delta(I, N)$ be increasing $c_{n-1}$. In the first case, we have not violated any conditions and we will deal with that value of $j$ when we move on to $c_n$ or larger. In the second case, as both $a_j, b_j\not\in\Delta(I, N)$, we may conclude that both $a_j, b_j \ge c_{N-1} \ge 2h_{n-1}$ so that  both $a_j$ and $b_j$ have $h_N$ in their descendent expansions. Thus, $a_j + b_j \ge 2h_{N} -2h_{N-1}\ge h_n + 2h_{N-1}$, telling us that $a_j + b_j \not\in \Delta(I, N-1)$. Repeating this process, it is clear that for no $k$ will all three of $\{a_k, b_k, a_k + a_k\}$ be in $\Delta$. Consequently, $\lim_{n\to \infty} \mu(A\cap T^{a_n}A\cap T^{a_n+b_n}A) = 0$.

\end{proof}}
In fact, the above constructed $T$ also has $$\lim_{n\to \infty} \mu(A\cap T^{a_n})\mu(A\cap T^{b_n}A)\mu(A\cap T^{a_n+b_n}A) = 0$$

The above examples show that $v$-positive type, due partly to its similarity to $k$-recurrence and partly due to the examples above, is a natural generalization to consider.

\section{Further explorations: positive type for different $v$}

We now want to discuss vectors $v,w$ in pairs. If $T$ is $v$-positive type, is it $w$-positive type? We start with a new definition for comparing vectors. Unfortunately, we will require different definitions for $v$-positive type and $v$-multiplicatively positive type. 

\begin{definition}
Given two vectors $v \in \mathbb{Z}^d$ and $w\in \mathbb{Z}^{d'}$, if there exist positive integers $n, m, c$ such that the set of components of $nv$ is included in the set of components of $mw-c$, where $c=0$ or $c=mw_i$ for some $i$, we say that $v$ is \emph{positive type less than} $w$, from now on denoted $v\leq_p w$. (Note that this property requires automatically that $d\le d'$).

Furthermore, if there exist positive integers $n,m,c$ such that each $nv_i =\sum_{j\in I(i)} mw_j - \sum_{j'\in J(i)}mw_{j'} $, then we say that $v$ is \emph{multiplicative-positive type less than} $w$, from now on denoted $v \leq_m w$.  Note that $v\leq_p w \implies v \leq_m w$ but the reverse direction does not hold. 
\end{definition}

One note about the above definitions: in the following theorems  using $\le_m$ and $\le_p$, we note that these constructions often hold only for vectors $v$ in the accepted form, that is $v_i$ distinct. Any transformation $T$ of positive type has $v$-positive type for any $v=(\ell, \ell, \ldots, \ell)\in \Zz^d$ or other such ``redundant vectors'' need to be considered specially from the vectors in standard format.

Observe also that if $v\leq_m w$, if $T$ is $w$ multiplicative-positive type, then $T$ is also $v$ multiplicative-positive type. Indeed, suppose that $nv$ has all its components contained in $mw$. If $T$ has $w$ multiplicative-positive type, then $T^m$ has $w$ multiplicative-positive type, so $T$ has $mw$ multiplicative-positive type, hence $T$ has $nv$ multiplicative-positive type. If we suppose that $T$ has $v$-multiplicative-zero type, then $T$ will have trivially $nv$-multiplicative-zero type. As this is a contradiction, $T$ has also $v$ multiplicative-positive type.

\begin{theorem}
Suppose $v\in \mathbb{Z}^d$. Then there exists a rank-one transformation $T$ which has $v$ multiplicative-positive type, but which does not have $w$ multiplicative-positive type for any $w\not\leq_m v$.
\end{theorem}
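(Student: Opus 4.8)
The plan is to construct, for a given $v \in \mathbb{Z}^d$ (in standard form, with distinct components and $v_1 < \cdots < v_d$), a rank-one transformation $T$ by cutting and stacking in which the only near-complete overlaps that ever occur are those dictated by the vector $v$ itself, up to the scaling $nv$ and shifting $mw - c$ freedoms built into $\le_m$. The guiding principle is the same as in the examples of Section~\ref{S:PTexamples}: at infinitely many stages $n_k$ (say at each step for simplicity) cut $C_{n}$ into $2d$ (or $d+1$) subcolumns and insert spacer blocks of heights that are integer multiples of $h_n$, chosen so that the bottom levels of the subcolumns sit at relative heights $0, h_n v_1, h_n v_2, \ldots, h_n v_d$ (plus one large ``filler'' block on the last subcolumn to absorb total measure and keep the construction infinite-measure). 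This immediately gives $\mu(I \cap T^{h_{n}v_1}I \cap \cdots \cap T^{h_n v_d}I) \ge \mu(I)/(2d)$ for every level $I$, so by the lemma of Section~\ref{S:approx} ($v$-$\alpha$-type on a sufficient semi-ring of levels implies $v$-positive, hence $v$-multiplicative-positive, for all sets) $T$ is $v$-multiplicative-positive type.

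The substance is the negative half: $T$ should fail $w$-multiplicative-positive type whenever $w \not\le_m v$. By Theorem~\ref{} (the characterization) this is the same as showing $T^{w_1} \times \cdots \times T^{w_{d'}}$ is not positive type, and since $T$ is rank-one it suffices to analyze overlaps of a level $I$ with its translates. Using Lemma~\ref{} on descendants, $\mu(I \cap T^n I) > 0$ iff $n \in D(I,m) - D(I,m)$ for some $m$; with the spacer heights all multiples of $h_n$ and the gap sequence $h_n$ growing fast enough (say $h_{n+1} \ge (10 v_d + 10) h_n$) every element of the difference set $\Delta(I) = \bigcup_m (D(I,m) - D(I,m))$ has a unique expansion, and at the relevant scale $h_{n-1} < n \le h_n$ the possible nonzero values of $\mu(I \cap T^n I)$ force $n$ to be (up to a bounded error term coming from lower-order $h_j$'s) one of $\pm h_{n-1}(v_i - v_{i'})$ or $\pm h_{n-1} v_i$. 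So if $\mu(I \cap T^{w_1 n}I \cap \cdots \cap T^{w_{d'}n}I) > 0$ then, reading off the pairwise differences $w_i n$ and the values $w_i n$ themselves, each must lie in $\Delta(I)$, which pins them all down to the scale of a single $h_{n-1}$ and expresses each $n w_i$ as $\sum_{j \in I(i)} m w_j^{(v)} - \sum_{j' \in J(i)} m w_{j'}^{(v)}$-type combinations of the $v$-gaps — exactly the relation $w \le_m v$.

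The key steps, in order: (1) write down the cutting-and-stacking recipe with spacer block heights $h_n v_1, h_n(v_2 - v_1), \ldots, h_n(v_d - v_{d-1})$ on the first $d$ subcolumns of a $(d+1)$-cut (plus the balancing block), and verify $v$-$\frac{1}{d+1}$-type on levels, hence $v$-multiplicative-positive type on all sets; (2) impose the fast-growth condition on $\{h_n\}$ ensuring unique descendant expansions and the ``bounded error'' localization at scale $h_{n-1}$; (3) prove the structural claim: for $h_{n-1} < n \le h_n$, if $\mu(I \cap T^n I) > 0$ then $n = a h_{n-1} + O(h_{n-2}\cdot v_d)$ with $a \in \{\pm(v_i - v_{i'})\} \cup \{\pm v_i\}$; (4) assemble: a nontrivial $w n$-overlap would, via (3) applied to every coordinate difference, realize $w$ as a signed-sum combination of the components of $v$ after rescaling, i.e. $w \le_m v$, contradiction; (5) lift from the level $I = C_0$ to all finite-measure sets as in the earlier examples, noting any finite-measure $A$ is approximated within $\epsilon$ by a finite union of levels $J \subset C_N$ and for $n > h_N$ the same combinatorics apply.

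The main obstacle I expect is step~(3)–(4): controlling the interaction between the ``large-scale'' term at level $h_{n-1}$ and the accumulated lower-order contributions from all the earlier subcolumn structure, so that the approximate relation one extracts is genuinely the exact combinatorial relation $\le_m$ and not merely an asymptotic one — in particular making sure the bounds $M \ge$ (something like $v_d^2(v_d+1)$) on the balancing spacer blocks are strong enough that distinct subcolumn pairs cannot ``accidentally'' produce the same difference, which is precisely the role the analogous inequalities on $M$ played in the $v$-multiplicative-but-not-$v$-positive example. A secondary subtlety is the handling of non-standard-form $w$ (repeated components, or $w$ of a different dimension $d'$), which the sentence before the theorem flags as needing separate treatment; these are genuinely excluded or trivial but must be addressed explicitly.
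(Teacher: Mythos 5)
Your proposal follows essentially the same route as the paper: the same $(d+1)$-subcolumn construction with spacers placing the subcolumn bottoms at offsets $0, v_1h_n,\ldots,v_dh_n$, a large balancing block forcing unique base-$(h_i)$ descendant expansions with $h_{m+1}$ dominating the accumulated lower-order terms, the $v$-$\frac{1}{d+1}$-type argument on levels for the positive half, and the dominant-term ratio extraction $w_i/w_j = A_i/A_j$ with $A_i$ ranging over signed sums of components of $v$ for the negative half. One small slip: to disprove $w$-\emph{multiplicative}-positive type you should start from positivity of each individual factor $\mu(I\cap T^{w_i n}I)$ (which directly places each $w_i n$ in the difference set) rather than from positivity of the joint intersection $\mu(I\cap T^{w_1n}I\cap\cdots\cap T^{w_{d'}n}I)$, but this only means your stated hypothesis is stronger than the one you actually have, and the combinatorial core of the argument is unchanged.
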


\begin{proof}
We will construct a rank-one transformation which satisfies the above conditions as follows. Let $I=C_0=[0,1]$. For each $m$, we construct $C_{m+1}$ by cutting $C_m$ in $d+1$ parts and adding  spacers on the first $d$ subcolumns such that the number of levels between the first $(d+1)$th of the bottom level of $C_m$ and the bottom $(d+1)$ths of the other levels are $$D':=\left\lbrace 0, v_1h_m, v_2h_m,...,v_dh_m\right\rbrace.$$ On the last subcolumn, we add a lot of spacers. We add sufficiently many spacers $s_n$ so that there is a unique representation of elements in $D(I,N)-D(I,N)$ in base $(h_i)$  and in fact so that where $H_m = \sum_{k\leq m} \sum_{1\leq i\leq d} v_i h_k$ we have the ratio $h_{m+1}/H_m$ grow without bound. Then we stack from left to right. See Figure \ref{whee} an illustration of this construction where $d=3$.\\ 
\begin{figure}  \resizebox{!}{6.5cm}{\begin{tikzpicture}[-,>=latex',shorten >=1pt,auto]
\tikzstyle{clump}=[draw,very thick, fill=white, rectangle, font=\footnotesize,minimum width=0.5 cm, minimum height=0.75 cm];
\tikzset{blank/.style={
        draw=none, rectangle, minimum width=0.7 cm, minimum height = 0 cm,
        append after command={
            [very thick,shorten >=0.2bp, shorten <=0.2bp]
            (\tikzlastnode.south west)edge[-](\tikzlastnode.south east)}}}

\tikzstyle{arrow}=[black, stateEdge, ->]
\tikzstyle{label} = [pos=0.5, text centered, font=\footnotesize];

	\draw node[blank] (4) {};
	\draw node[clump] (3) [below=1.8 cm of 4] {$C_3$};
     \draw  node[clump] (2)  [below=0.8 cm of 3] {$C_2$};
     \draw  node[clump] (1) [below=0.7 cm of 2] {$C_1$} ;
     \draw node[clump] (0) [below=0.5 cm of 1] {$C_0$};
     
     \path[->]
     	(3.south east) edge[bend right=55] node[label, xshift=1.5 em]{$s_n$} (4.south east)
     (2.south east) edge[bend right = 50] node[label, xshift=2.5 em]{$v_3h_n$} (3.south east)
     	(1.south east) edge[bend right=45] node[label, xshift=2.5 em]{$v_2h_n$} (2.south east)
	(0.south east) edge[bend right=45] node[label,xshift=2.5 em]{$v_1h_n $} (1.south east);     
\end{tikzpicture}}\caption{\label{whee}}\end{figure}
\indent
Now, to see that $T$ is $v$-multiplicative-positive type we note that for any $n=h_m$, we have (using the notion of descendants):

 \begin{align*} 
\mu(I\cap &T^{v_1h_m}I)...\mu(I\cap T^{v_dh_m}I)\\&\geq  \left(\frac{\mu(I)}{d+1} |D'\cap D'+v_1h_m|\right)\ldots \left(\frac{\mu(I)}{d+1} |D'\cap D'+v_dh_m|\right)\\ 
&\geq \left(\frac{1}{d+1}\right)^d |D' \cap D' + v_1h_m|\ldots |D' \cap D'+v_dh_m|\\ 
&= \left(\frac{1}{d+1}\right)^d.
 \end{align*} 
 
In particular, this proves that $$\limsup \mu(I\cap T^{v_1n}I)...\mu(I\cap T^{v_dn}I)>0.$$ By the lemmas in earlier sections, as we have this constant $\left(\frac{1}{d+1}\right)^d$ delineating the positive overlap, this argument gives $T$ is $v$-multiplicative-positive type for all $A$ of positive measure.\\ \\
Now, given any $w\in \mathbb{Z}^{d'}$, if for every $M$ there exists an $n>M$ such that $\mu(I\cap T^{v_1n}I)...\mu(I\cap T^{v_dn}I)> 0$, we can show that $w\leq_m v$ as described above. First, pick an $M$ large enough so that $$h_{M}>\frac{ w_{d'}}{w_1}H_{M-1}$$
Then, if we have $n>M$ and the minimum $N(n)$ such that  $$w_1n, w_2n,..., w_cn \in D(I,N)-D(I,N),$$ by the construction of the columns $C_n$, there is a unique representation of elements in $D(I,N)-D(I,N)$ in base $(h_j)$. For each $1\leq i \leq d'$ let $A_ih_{\ell_i}$ be the biggest term in the expansion of $w_in$. Then, for any $i,j$ derived this way, we note that because $h_{m+1} \gg h_m$ and by our choice of $M$, we must have $h_{\ell_i}=h_{\ell_j} = h_N$. (Thus $\ell_i=\ell_j=N$). In fact, we note that when we express $\frac{w_i}{w_j}$ using the expansions, the $h_N$ dominate all other terms. For large enough $n$, as $w_1n, \ldots, w_{d'}n$ must all occur in the same scale $(h_m \ge w_i n \ge h_{m+1}$), and as the construction gives the same method for cutting and stacking at every $n$, there are a finite number of possibilities for $A_i, A_j$ as we increase $n$. As $h_{m+1}/H_M$ grows without bound, for some choice of $A_i, A_j$, as we increase $n$ we see that  $\frac{A_i}{A_j} - \lim \frac{H_{n-1}}{h_n}\le\frac{w_i}{w_j}\le \frac{A_i}{A_j} + \lim \frac{H_{n-1}}{h_n}$ has the limit go to zero, we observe that the following ratio will be satisfied:
$$\frac{w_i}{w_j} = \frac{A_i}{A_j}\text{ or } w_i = A_i \big(\frac{w_1}{A_1}\big)$$
 However, each 
 \[A_i \in \left\{ 0, v_1, v_2, \ldots, v_d, v_2 - v_1, \ldots , v_{d-1} - v_d, \ldots, \sum_I v_j - \sum_J v_i \right\}\] 
 for certain partitions $I$ and $J$ by the construction of $T$. Substituting these terms in explicitly, we see that $w_i = (\sum v_k-\sum v'_k)\frac{w_1}{\sum v_j-\sum v'_j}\implies w_i \sum v_j - w_i\sum v'_j = \sum v_k w_1-\sum v_k'w_1$. Note that the $v_j,v'_j$ (derived from $A_1$) are constant and the $v_k,v_k'$ (derived from $A_i$) are not. Thus by definition, $w\leq_m v$. 
 
One note: not all $w\leq_m v$ will have $w$-multiplicative-positive type for this constructed $T$. However, variations $S$ on $T$ can be constructed so that $S$ will be $w$-multiplicative-positive type. These variations are obtained by putting the spacers on the subcolumns so that the order is varied -- $v_2 h_n$ before $v_1 h_n$, for example.  
 
\end{proof}
~\

\begin{theorem}
The same statement as Theorem 7.2 but for $v$-positive type, replacing $\le_m$ with $\le_p$. 
\end{theorem}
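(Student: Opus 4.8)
The plan is to mirror the construction and argument from the proof of Theorem 7.2, adapting it from the multiplicative setting to the plain intersection setting. I would again take a rank-one transformation built from $C_0 = [0,1]$, cutting $C_m$ into $d+1$ subcolumns and inserting spacers on the first $d$ subcolumns so that the heights of the bottom $(d+1)$-st pieces relative to the bottom piece of the last subcolumn form the set $D' = \{0, v_1 h_m, v_2 h_m, \ldots, v_d h_m\}$; on the last subcolumn I put a rapidly growing number of spacers $s_n$, large enough that elements of $D(I,N) - D(I,N)$ have a unique base-$(h_i)$ representation and so that $h_{m+1}/H_m \to \infty$ with $H_m = \sum_{k \le m}\sum_{1\le i\le d} v_i h_k$ as before.

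First I would verify that $T$ is $v$-positive type. Taking $n = h_m$, the descendant calculation of Corollary after Lemma (the exact-value formula for $\mu(J\cap T^nJ)$) shows that the images $I, T^{v_1 h_m}I, \ldots, T^{v_d h_m}I$ all share a common sub-piece of measure at least $\tfrac{1}{d+1}\mu(I)$, since the set $D'$ was designed precisely so that the bottom $(d+1)$-st of each of these translates coincides. Hence $\limsup_n \mu(I\cap T^{v_1 n}I\cap\cdots\cap T^{v_d n}I) \ge \tfrac{1}{d+1}\mu(I)$, and this holds for every level of every defining column since the same cutting pattern is repeated at every stage; the lemma on passing from $v$-$\alpha$-type on levels to $v$-positive type for all sets (Section~\ref{S:approx}) then gives that $T$ is $v$-positive type.

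Second, for the negative direction, I would suppose $w\in\mathbb{Z}^{d'}$ satisfies $\limsup_n \mu(I\cap T^{w_1 n}I\cap\cdots\cap T^{w_{d'}n}I) > 0$ and deduce $w\le_p v$. The key point, supplied by Lemma on rank-one intersections, is that $\mu(I\cap T^{w_i n}I) > 0$ forces $w_i n \in D(I,N)-D(I,N)$ for some $N$; choosing $n$ large and $N = N(n)$ minimal, the same leading-term analysis as in Theorem 7.2 applies --- with $h_{m+1}/H_m\to\infty$ and a choice of $M$ with $h_M > (w_{d'}/w_1) H_{M-1}$, each $w_i n$ has leading term $A_i h_N$ with the same top index $N$, and $A_i$ ranges over $\{0, v_1, \ldots, v_d, v_2 - v_1, \ldots, \sum_I v_j - \sum_J v_j\}$. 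The difference from the multiplicative case: here the intersection being nonzero means all the translates overlap \emph{simultaneously}, so in the limit the normalized positions $w_i n / h_N$ must all equal a \emph{common} value from $D'/h_N$-type combinations, i.e.\ each $w_i n$ reduces (after subtracting a common shift $c = w_{i_0}n$ or $c=0$) to a single $v_j h_N$; this is exactly the condition that the components of $nw$, shifted by $c$, lie among the components of $m v$, which is $w\le_p v$ by definition.

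The main obstacle I anticipate is making the ``simultaneous overlap forces the $A_i$ into the set of components of $v$ (not merely differences of them)'' step rigorous: in the multiplicative version each $\mu(I\cap T^{w_i n}I)$ only had to be individually positive, so $A_i$ could be any signed partial sum of the $v_j$, whereas here the single common sub-piece witnessing $\mu(I\cap T^{w_1 n}I\cap\cdots)>0$ must simultaneously be a descendant-level translate by each $w_i n$, which pins the relative positions down to an honest translate of $D'$ rather than an arbitrary difference set. I would handle this by tracking a single descendant level of $I$ through all $d'$ translations at scale $h_N$, using uniqueness of the base-$(h_i)$ expansion to conclude that the multiset $\{w_i n\}$ differs from a sub-multiset of $\{0, v_1 h_N, \ldots, v_d h_N\}$ only by lower-order terms and a global shift, then letting $n\to\infty$ to kill the lower-order terms. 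The redundant-vector caveat (vectors with repeated components, $v=(\ell,\ldots,\ell)$) would be noted separately exactly as in the discussion following the definition of $\le_p$.
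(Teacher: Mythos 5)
Your proposal matches the paper's proof: the paper likewise modifies the Theorem 7.2 construction by reversing the spacer pattern so that the descendants of $I$ sit at distances $v_1h_n,\ldots,v_dh_n$ \emph{below} a common top copy (making that copy a simultaneous witness for $I\cap T^{v_1h_n}I\cap\cdots\cap T^{v_dh_n}I$), and the converse is run exactly as you describe, by fixing the common descendant $M(n)$ and using unique base-$(h_i)$ expansions to force each leading coefficient $A_i$ into $\{v_1-v_J,\ldots,v_d-v_J\}$ for a single $v_J$ determined by $M(n)$, which is the $\le_p$ condition. The only nit is that the common shift ends up being $c=0$ or $c=mv_J$ (a component of $mv$), not $w_{i_0}n$; your final formulation of the conclusion is the correct one.
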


\begin{proof}
We construct a similar transformation, with the spacer pattern seen in Figure \ref{last}
\begin{figure}  \resizebox{!}{10.5cm}{\begin{tikzpicture}[-,>=latex',shorten >=1pt,auto]
\tikzstyle{clump}=[draw,very thick, fill=white, rectangle, font=\footnotesize,minimum width=1 cm, minimum height=0.75 cm];
\tikzstyle{dot} = [draw, circle, fill=black, scale=0.1];
\tikzset{blank/.style={
        draw=none, rectangle, minimum width=0.7 cm, minimum height = 0 cm,
        append after command={
            [very thick,shorten >=0.2bp, shorten <=0.2bp]
            (\tikzlastnode.south west)edge[-](\tikzlastnode.south east)}}}

\tikzstyle{arrow}=[black, stateEdge, ->]
\tikzstyle{label} = [pos=0.5, text centered, font=\footnotesize];

	\draw node[blank] (6) {};
	\draw node[clump] (5) [below = 2 cm of 6] {$C_{d}$};
	\draw node[clump] (3) [below=1.8 cm of 5] {$C_{d-1}$};
	\draw node[dot] (a) [below=0.3 cm of 3] {};
	\draw node[dot] (b) [below=0.2 of a] {};
	\draw node[dot] (c) [below=0.2 of b] {};
     \draw  node[clump] (2)  [below=1.1 cm of 3] {$C_2$};
     \draw  node[clump] (1) [below=0.7 cm of 2] {$C_1$} ;
     \draw node[clump] (0) [below=0.5 cm of 1] {$C_0$};
     
     \path[->]
     	(5.south east) edge[bend right=55] node[label, xshift=1.8 em]{$s_n$} (6.south east)
	(3.south east) edge[bend right=55] node[label, xshift=3 em]{$v_1h_n$} (5.south east)
     	(1.south east) edge[bend right=45] node[label, xshift=7 em]{$(v_{d-1}-v_{d-2})h_n$} (2.south east)
	
	(0.south west) edge[bend left=100, min distance=4.5cm] node[label, xshift=0 em]{$v_d h_n$} (5.south west)
	(1.south west) edge[bend left=75, min distance = 2.8 cm] node[label, xshift=0.2 em]{$v_{d-1} h_n$} (5.south west)
	(2.south west) edge[bend left=35] node[label, xshift=0.2 em]{$v_{d-2}h_n$} (5.south west)

	(0.south east) edge[bend right=45] node[label,xshift=6 em]{$(v_d-v_{d-1})h_n $} (1.south east);     
\end{tikzpicture}}\caption{\label{last}}\end{figure}
so that there are enough spacers $s_n$ that all elements of $D(I,N)-D(I,N)$ have a unique representation in $(h_i)$\\

Then it is clear that this transformation is $v$-positive type. To see that it is only $v$-positive type for $w\le_p v$, if it $w$-positive type, we must have an infinite sequence of $n$ with $w_in\in \{ M(n) - D(I,N)\}$, where $M(n)$ is fixed and in $D(I,N)$. Say $w_in$ is expanded with coefficients $A_i$ in $(h_i)$. Then note again that through the unique representation of $w_in$, we pick out the largest term $A_ih_{\ell_i}$ for each $1\le i\le d'$  and $B$, the largest term of $M(n)$ (such that it is no larger than the largest term of $w_i$, that is).  All $h_{\ell_i}$ must be equal, once we choose $n$ large enough compared to the terms of $w$. Then, as before, $\frac{w_i}{w_j}=\frac{A_i}{A_j}$ but furthermore  by our construction, $A_i, A_j \in \{ v_1-v_J, v_2-v_J, \ldots v_d - v_J\}$ where $v_J$ depends only on the value of $B$, and then the equation $w_i = w_1(A_i)/A_1$ actually implies that $w_i = w_1(v_k-v_J)/(v_j-v_J)$ and this gives the correct result, as the only term that varies with $w_i$ is $v_k$.

\end{proof}

\begin{corollary}
Let $\mathcal{V}=\left\lbrace v\right\rbrace $ be a finite family of finite vectors. Then there exists a rank-one transformation $T$ which is $v$ multiplicative-positive type, for all $v\in \mathcal{V}$, but which is not multiplicative-positive type for any finite vector $w$ where $w\not \leq_m v$ for all $v \in \mathcal{V}$. Similarly for positive type. 
\end{corollary}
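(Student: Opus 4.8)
The plan is to mimic the constructions of Theorems 7.2 and 7.3, but interleaving the ``signature'' blocks of all finitely many vectors in $\mathcal{V}$ within a single cutting and stacking sequence. First I would enumerate $\mathcal{V} = \{ u^{(1)}, \ldots, u^{(r)} \}$ and fix a cyclic order on them. At stage $m$ of the construction, rather than always inserting the spacer pattern associated with one fixed vector, I insert the pattern associated with $u^{(m \bmod r)}$ (using $d^{(s)}+1$ subcolumns when treating $u^{(s)} \in \Zz^{d^{(s)}}$, with the spacer gaps equal to the components of $h_m u^{(s)}$ in the multiplicative case, or the partial-sum pattern of Figure~\ref{last} in the positive-type case), and, as in the earlier proofs, I add enough spacers $s_m$ on the final subcolumn to force unique base-$(h_i)$ representations of all elements of $D(I,N) - D(I,N)$ and to make $h_{m+1}/H_m \to \infty$, where $H_m$ now sums over the (finitely many, uniformly bounded) component sizes of all vectors in $\mathcal{V}$.

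The next step is to verify the positive part: for each $v \in \mathcal{V}$ and each of the infinitely many stages $m$ with $m \equiv s \pmod r$ where $u^{(s)} = v$, the computation from Theorem 7.2 (respectively 7.3) goes through verbatim at $n = h_m$, giving $\mu(I \cap T^{v_1 n}I)\cdots\mu(I\cap T^{v_d n}I) \ge (d+1)^{-d}$ (respectively $\mu(I\cap T^{v_1 n}I\cap\cdots\cap T^{v_d n}I) \ge (d+1)^{-1}$), since the other stages only add spacers and cannot destroy an overlap already arranged at stage $m$. Because this overlap constant is bounded away from zero along a subsequence, the lemmas of Section~\ref{S:approx} upgrade $v$-(multiplicative-)positive type from the level $I$ to all sets of positive finite measure. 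Thus $T$ is $v$-(multiplicative-)positive type for every $v \in \mathcal{V}$.

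For the converse, suppose $T$ is $w$-multiplicative-positive type (resp.\ $w$-positive type) for some finite vector $w$. Then there is an infinite set of $n$ with $\mu(I \cap T^{w_1 n}I)\cdots > 0$ (resp.\ with all $w_i n$ lying in a common translate $M(n) - D(I,N)$). Passing to a subsequence, infinitely many of these $n$ must fall in stages congruent to a single residue $s \pmod r$, so that the local cut-and-stack pattern near scale $n$ is exactly that of $u^{(s)}$. From here the argument of Theorem 7.2 (resp.\ 7.3) applies unchanged: the unique base-$(h_i)$ expansions, the domination by the $h_N$-term forced by $h_{m+1}/H_m \to \infty$, and the finiteness of the coefficient set $\{0, v_1, \ldots, v_d, v_i - v_j, \ldots\}$ together yield $w \leq_m u^{(s)}$ (resp.\ $w \leq_p u^{(s)}$), contradicting $w \not\leq_m v$ for all $v \in \mathcal{V}$ (resp.\ $w \not\leq_p v$).

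The main obstacle I anticipate is bookkeeping the interaction \emph{between} stages belonging to different vectors: I must ensure that spacer blocks inserted for $u^{(s)}$ at stage $m$ do not accidentally create an algebraic coincidence in $D(I,N) - D(I,N)$ that produces spurious $w$-overlap for some $w$ not below any vector in $\mathcal{V}$. This is exactly what the rapid-growth condition $h_{m+1}/H_m \to \infty$ is designed to prevent --- it guarantees that any relation among the $w_i n$ visible at scale $n$ is governed entirely by the single pattern active at the dominant scale --- but one has to check that $H_m$ can be taken to absorb the contributions of \emph{all} $r$ patterns simultaneously, which is fine since $r$ is finite and all component sizes are bounded. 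A secondary point requiring care is that a given $w \leq_m v$ (or $\leq_p v$) need not itself be realized as multiplicative-positive type by this particular $T$; as in the remark at the end of Theorem 7.2's proof, one only claims the contrapositive direction, so no additional work is needed there.
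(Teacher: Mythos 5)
Your proposal is correct and follows essentially the same route as the paper: the paper likewise interleaves the spacer patterns of the finitely many vectors across the construction stages (it assigns stages to vectors via the largest prime power of $m$ rather than your cyclic $m \bmod r$, but either scheme gives each vector infinitely many stages), enforces unique base-$(h_i)$ representations with $H_m$ summing over all patterns, and reduces the converse to the single pattern active at the dominant scale exactly as in Theorems 7.2 and 7.3. Your cyclic assignment is, if anything, a cleaner way to achieve the same end.
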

\begin{proof}
We will use the same process as in the proof of the theorem above to produce a transformation which has the requested properties. If $\mathcal{V} = V_1, V_2,...V_D$ we define the transformation $T$ as follows: for each $m>1$ with largest prime power $p_j$ we divide $C_m$ into $d+1$ columns where $V_i\in \mathbb{Z}^d(V)$ so that $V_i = (V_{i,1}, V_{i,2}, \ldots, V_{i,d(V)})$ for $i= j\mod D$ and then we add spacers exactly as above except for the last column, where there are enough spacers added so that $h_{m+1} \geq H_M$ where $H_M=\sum_{k\leq m} \sum_{i\leq m}\sum_{1\leq j \leq d(V)} V_{i,j} h_k$. \\

Because for any prime there are an infinite number of powers $p^i$, for any $V_j \in \mathcal{V}$, the corresponding prime $p$ gives that $$\mu\left(I \cap T^{h_{p^i} V_{j,1}} I\right) \ldots \mu\left(I\cap T^{h_{p^i} V_{j,d(V_j)}}I\right)\geq \left(\frac{\mu(I)}{d(V_j)+1}\right)^{d(V_j)}$$  and so $T$ is $V_J$-positive type. \\ \\
However, using an argument similar to above
it can be seen that if $T$ is $w$ positive type, there must be a $v\in \mathcal{V}$ such that $w\leq_m v$.

Extension onto Theorem 7.3 is done exactly analogously. 
\end{proof}

This section essentially showed that except for the ``natural'' dependencies, different vectors $v,w\in (Z^+)^d$ behave independently for $v$-positive type.

\vspace{10 mm}


\begin{thebibliography}{99}


\bibitem[AN00]{AN00} Jon Aaronson, Hitoshi Nakada. {\em Multiple Recurrence of Markov Shifts and Other Infinite Measure Preserving Transformations.}  Israel J. Math. {\bf117} (2000) 285--310.\\

\bibitem[BFMCS]{SMALL01} Amie Bowles, Lukasz Fidowski, Amy E. Marinello, and Cesar E. Silva. {\em Double Ergodicity of Nonsingular Transformations and Infinite Measure-Preserving Staircase Transformations.} Illinois J. Math. {\bf 45} (Fall 2001) 999-1019. \\

\bibitem[BF64]{BF64}  David Blackwell, David Freedman,  {\em The tail $\sigma$-field of a Markov chain and a theorem of Orey.} Ann. Math. Statist. 35 1964 1291Ð1295. \\


\bibitem[DGPS]{SMALL} Irving Dai, Xavier Garcia, Tudor Padurariu, and Cesar E. Silva. {\em On Rationally Ergodic and Rationally Weakly Mixing Rank-One Transformations}. Pre-published, on the arxiv: http://arxiv.org/abs/1208.3161.\\

\bibitem[DGMS]{SMALL99} Sarah Day, Brian Grivna, Earle McCartney, and Cesar Silva. {\em Power Weakly Mixing Infinite Transformations}. Ney York Journal of Mathematics. {\bf 5} (1999) 17-24.\\


\bibitem[EHH98]{EHH98}Stanley Eigen, Arshag Hajian, and Kim Halverson. {\em Multiple Recurrence and Infinite Measure Preserving Odometers.} Israel Journal of Mathematics {\bf 108} (1998), 37-44.\\

\bibitem[H65]{H65}Arshag B. Hajian. {\em On Ergodic Measure-preserving Transformations Defined on an Infinite Measure Space.} Proc. Amer. Math. Soc. {\bf16} (1965) 45--48. \\

\bibitem[HK64]{HK64} Arshag B. Haijian, Shizuo Kakutani. {\em Weakly Wandering Sets and Invariant Measures.} Trans. Amer. Math. Soc. 110. (January 1964) 136--15.\\

\bibitem[KP63]{KP63} S. Kakutani, W. Parry. {\em Infinite Measure Preserving Transformations with ``Mixing.''} Bull. Amer. Math. Soc. 69, (1963) 752--6.\\

\bibitem[KS69]{KS69} U. Krengel, L. Sucheston {\em On mixing in infinite measure spaces.} Zeitschrift f\"ur Wahrscheinlichkeitstheorie und Verwandte Gebiete, Volume 13, Issue 2 (1969), pp 150-164.\\


\bibitem[L09]{L09} John. Lindhe. {\em Exhaustive weakly wandering sequences for alpha type transformations.} (Doctoral dissertation). Retrieved from Northeastern University IRis database. 2009. \\

\bibitem[OH71]{OH71} Motosige Osikawa, Toshihiro Hamachi. {\em On zero type and positive type Transformations with Infinite Invariant Measures} Mem. Fac. Sci. Kyushu Univ. Ser. A Vol. 25 (1971) 280--295. \\


\bibitem[S08]{S08} C. E. Silva, {\em Invitation to Ergodic Theory}, American Mathematical Society, 2008. \\

\end{thebibliography}
\end{document}